\documentclass[11pt]{amsart}
\linespread{1.2}
\usepackage[headings]{fullpage}

\usepackage{amssymb,amsmath,amstext,amsthm,mathtools,caption}
\usepackage{enumerate,url,comment,tikz,tikz-cd}
\usepackage{cmbright}

\usepackage[colorlinks=true,urlcolor=cyan,linkcolor=blue,citecolor=magenta]{hyperref}


\usepackage{xpatch}
\xpatchcmd{\qed}{\hfill}{}{}{}
\xpatchcmd{\qed}{\quad}{\qquad}{}{}

\theoremstyle{theorem}
\newtheorem{theorem}{Theorem}
\newtheorem{proposition}[theorem]{Proposition}
\newtheorem{lemma}[theorem]{Lemma}
\newtheorem{corollary}[theorem]{Corollary}
\newtheorem{conjecture}[theorem]{Conjecture}
\newtheorem{problem}[theorem]{Problem}

\theoremstyle{definition}
\newtheorem{definition}[theorem]{Definition}

\newcommand{\conv}{\mathrm{conv}}
\newcommand{\vertices}{\mathrm{Vert}}

\newcommand{\set}[2]{\left\{ #1\,\middle|\, #2\right\}}

\newcommand{\gc}{g^c}
\newcommand{\hc}{h^c}
\newcommand{\hsc}{h^{sc}}

\newcommand{\dhalf}[1]{\left\lfloor #1/2\right\rfloor}
\newcommand{\duhalf}[1]{\left\lceil #1/2\right\rceil}
\newcommand{\vspan}{\mathrm{Span}}
\newcommand{\R}{\mathbb{R}}

\newcommand{\pn}{{\sf PN}}
\newcommand{\pr}{{\sf PR}}

\def\mchoose#1#2{\ensuremath{\left(\kern-.3em\left(\genfrac{}{}{0pt}{}{#1}{#2}\right)\kern-.3em\right)}}


\title{On the realization space of the cube}

\author{Karim Adiprasito}
\address{Department of Mathematics, University of Copenhagen, Copenhagen \and\newline Einstein Institute of Mathematics, The Hebrew University of Jerusalem}
\email{adiprasito@math.huji.ac.il}

\author{Daniel Kalmanovich}
\address{Einstein Institute of Mathematics, The Hebrew University of Jerusalem}
\email{daniel.kalmanovich@gmail.com}

\author{Eran Nevo}
\address{Einstein Institute of Mathematics, The Hebrew University of Jerusalem}
\email{nevo.eran@gmail.com}

%
%
%

\begin{document}

\begin{abstract}
	We consider the realization space of the $d$-dimensional cube, and show that
	any two realizations are connected by a finite sequence of projective transformations and normal transformations. We use this fact to define an analog of the connected sum construction for cubical $d$-polytopes, and apply this construction to certain cubical $d$-polytopes to conclude that the rays spanned by $f$-vectors of cubical $d$-polytopes are dense in Adin's cone. The connectivity result on cubes extends to any product of simplices, and further, it shows the respective realization spaces are  contractible.
\end{abstract}

\maketitle

\section{Introduction}
Perhaps the most natural transformations on polytopes that preserve the combinatorial type, namely the facial structure, are projective transformations
and normal transformations.
Loosely speaking, the former are given by perspective transformation from one hyperplane where the polytope lies to another hyperplane,
while the latter are given by scaling the outer normal vectors to facets so that facets do not degenerate. While the former are connected to the projective linear group acting on vector spaces, the latter is connected to the Chow cohomology of toric varieties, and in particular inherits an algebra structure via the Minkowski sum~\cite{McMu93}. (By \emph{polytope} we always mean a convex polytope.)

The simplex, of any fixed dimension, is \emph{projectively unique}, namely, any simplex can be continuously transformed to any other simplex of same dimension by a homotopy of projective transformations. Thus, any two \emph{simplicial} polytopes, after applying an appropriate projective transformation to one of them, can be glued along a common facet whose hyperplane separates them, to produce again a convex polytope. This realizes the connected sum operation geometrically.

However, the $d$-cube is not projectively unique for $d\ge 3$; this can be seen even by dimension count: the realization space of the (combinatorial) $d$-cube has dimension larger then the dimension of the space of projective transformations. Indeed, the group of projective transformations on $\mathbb{R}^d$ is of dimension $d(d+2)$, while the realization space of the $d$-cube has dimension $2d^2$.

In particular, we can not realize the connected sum operation geometrically for cubical $d$-polytopes, $d\ge 4$.

We enlarge the set of transformations by adding normal transformations to the generating set. While the first author mentioned this theorem in passing, assuming it had to be known, it was to our surprise that the following results appear to be new, even the qualitative assertion in $(a)$.

\begin{theorem}[Cubes are normal-projectively unique]\label{thm:eq-cubes}
	Fix a dimension $d$.
	\begin{enumerate}[(a)]
		\item
		For any two realization of the $d$-cube, one can be obtained from the other by a composition of finitely many transformations, each is either projective or normal. In fact, $8d$ of them suffice.
		
		
		
		\item The constructed algorithm transforms cubes continuously to the standard cube. In particular, we obtain a deformation retraction to a point. The realization space of cubes is contractible.
	\end{enumerate}
\end{theorem}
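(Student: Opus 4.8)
My plan is to induct on $d$ and prove the two assertions together, producing for every realization $P$ of the $d$-cube an explicit list of at most $8d$ transformations, each projective or normal and each depending continuously on $P$, whose composition carries $P$ to the standard cube $C_d=[0,1]^d$.

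The base cases $d\le 2$ are classical: a convex quadrilateral has its four vertices in general position in $\mathbb{RP}^2$, hence is carried to the unit square by a single projective transformation. For the inductive step I first spend \emph{one} projective transformation to put a chosen pair of opposite facets $F_d^-,F_d^+$ into parallel position. The point is that $\mathrm{aff}(F_d^-)$ and $\mathrm{aff}(F_d^+)$ meet in a codimension-two flat $L$ disjoint from $P$, since opposite facets of a cube do not meet; projecting $P$ along $\mathrm{aff}(L)$ to a plane, $P$ becomes a convex polygon missing the image of $L$, so the pencil of hyperplanes through $L$ contains members disjoint from $P$, and sending such a hyperplane to infinity makes $\mathrm{aff}(F_d^-)\parallel\mathrm{aff}(F_d^+)$. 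After an affine normalization I may take $F_d^-\subseteq\{x_d=0\}$ and $F_d^+\subseteq\{x_d=1\}$, and I record the elementary fact, used repeatedly below, that once these two hyperplanes are parallel each facet of the $(d-1)$-cube $F_d^+$ is parallel in $\R^{d-1}$ to the corresponding facet of $F_d^-$; in particular the two share a normal fan.

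The heart of the argument is to straighten the remaining structure while keeping the combinatorial type, alternating projective transformations (which move facet normals) with normal transformations (which rescale the support numbers of the facets other than $F_d^{\pm}$, reshaping the slices $P\cap\{x_d=t\}$ without disturbing their normals). Using the inductive hypothesis for $F_d^-$, implemented through transformations of $\R^d$ that restrict correctly to $\{x_d=0\}$, one drives $F_d^-$ to $C_{d-1}$; keeping track of $F_d^+$ under the same moves, and invoking both the parallel-facet observation and normal transformations that act on $F_d^-$ and $F_d^+$ simultaneously, one brings $P$ to a prism and finally to $C_{d-1}\times[0,1]=C_d$ — here one uses that every vertex of a cube lies on $F_d^-$ or $F_d^+$, so $P=\conv(F_d^-\cup F_d^+)$ at that stage. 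I expect the main obstacle to be exactly the bookkeeping that keeps this interleaving finite, with at most $8$ transformations per coordinate direction, and that certifies each intermediate transformation is admissible (its exceptional hyperplane misses the current polytope). This difficulty is unavoidable: because the cube is not projectively unique, a single projective transformation can straighten only one pair of opposite facets, and there is a genuine obstruction to straightening a further pair projectively without disturbing the pairs already straightened, so the normal transformations are indispensable for supplying the missing degrees of freedom.

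For part $(b)$ I would observe that every transformation output by the algorithm can be chosen by explicit rational formulas in the facet data of the current polytope, so it depends continuously on $P$, and that each single transformation is joined to the identity through admissible transformations of the polytope it acts on: a normal transformation along the straight segment in the open convex type cone of its normal fan, and a projective transformation along a path in $\mathrm{PGL}^+_{d+1}$ whose exceptional hyperplane is moved back to the hyperplane at infinity within the set of hyperplanes disjoint from the current polytope — a set which is connected, since in any pencil of hyperplanes it cuts out an arc. Concatenating these homotopies over the at most $8d$ steps gives a homotopy, continuous in $P$, from the identity of the realization space to the constant map at $C_d$; this is the claimed deformation retraction onto a point, whence the realization space of the $d$-cube is contractible.
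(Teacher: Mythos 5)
Your overall strategy --- induct on the dimension, straighten one pair of opposite facets projectively, and then transfer the $(d-1)$-dimensional solution for the bottom facet into ambient transformations of $P$ --- differs from the paper, which never inducts on dimension: it passes to the polar crosspolytope and, for each of the $d$ antipodal vertex pairs (dual to the $d$ pairs of opposite facets), performs an explicit dance of four transformations (translate the origin onto the diagonal, push one vertex out along its ray, move the origin past a separating hyperplane, snap the remaining vertices onto that hyperplane), which after $d$ iterations yields a crosspolytope ray-equivalent to the standard one. That route entirely avoids the step on which your argument hinges, and that step contains a genuine gap.

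The gap is the lifting of the \emph{normal} transformations supplied by the inductive hypothesis. A projective transformation of $\{x_d=0\}$ carrying $F_d^-$ forward does extend to an admissible projective transformation of $\R^d$ (your exceptional $(d-2)$-flat lies in a supporting hyperplane of $P$ and misses $F_d^-$, so it misses $P$ and extends to a hyperplane missing $P$). But a normal transformation of $F_d^-$ does not lift in any straightforward way. The facets of $P$ other than $F_d^{\pm}$ are not parallel to $e_d$; to realize a prescribed translation of the facet hyperplanes of $F_d^-$ you must translate the corresponding tilted facet hyperplanes of $P$, and this moves the top slice $F_d^+$ by a \emph{different} (direction-dependent, amplified) normal transformation, which can leave the type cone of $P$: vertices of $F_d^+$ can collide or the candidate polytope can fail to be a combinatorial cube even though the bottom slice is fine. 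The restriction map from the type cone of $P$ to that of $F_d^-$ need not be surjective, and the normal moves demanded by the induction are not small. (Lifting would be immediate if $P$ were a prism over $F_d^-$, but making $P$ a prism is not achievable by your initial affine normalization: two parallel, normally equivalent opposite facets of a cube are generally not translates of one another, and no affine map fixing the bottom hyperplane and the top hyperplane corrects this.) There is also a quantitative shortfall: you budget $8d$ transformations to reach the standard cube from \emph{one} realization, whereas the theorem's bound of $8d$ is for connecting \emph{two} arbitrary realizations (the paper uses $4d$ per cube and merges the middle normal transformations to get $8d-1$); and your count of ``$8$ per coordinate direction'' presupposes that each inherited $(d-1)$-dimensional transformation costs exactly one ambient transformation, which is exactly what the unproven lifting step would have to guarantee. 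Your part (b) is in the right spirit (canonical choices, joining each move to the identity), but it inherits the gap from part (a).
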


Let us stress that we stay entirely inside the space of cubes. Every transformation takes us from one cube to another; not one of the projective transformations results in an unbounded polytope.

As a corollary of the quantitative assertion in $(a)$, we obtain a cubical analog of the connected sum construction, at a small price.

\begin{theorem}\label{thm:cubical.conn.sum}
	\begin{enumerate}[(a)]
		\item (Bounded towers) For any two realizations $C_1$ and $C_2$ of the $(d-1)$-cube, there exists a cubical $d$-polytope $C$ made of $m$ ($m\le 4d$) $d$-cubes stacked one on top of the other, such that $C_1$ and $C_2$ are projectively equivalent to its bottom and top facets, resp. Call $C$ a \emph{$d$-tower of $m$ cubes}.
		
		\item (Cubical connected sum) For any two cubical $d$-polytopes $P_1$ and $P_2$, and facets $F_i$ of $P_i$, $i=1,2$, there exists a projective transformation $\phi$ and a $d$-tower $T$ of at most $4d$ cubes, such that $P:=P_1\cup T\cup \phi(P_2)$ is \emph{convex}, $P_1\cap T=F_1$ and $\phi(P_2)\cap T=\phi(F_2)$ are the top and bottom facets of $T$ respectively.
	\end{enumerate}
\end{theorem}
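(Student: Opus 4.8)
The two statements are linked: (a) is the workhorse and (b) is a gluing argument layered on top of it, so I would prove (a) first.

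For (a), the plan is to apply Theorem~\ref{thm:eq-cubes}(a) \emph{in dimension $d-1$} to the pair $C_1,C_2$, obtaining a chain of realizations $C_1=D_0,D_1,\dots,D_k=C_2$ of the $(d-1)$-cube with $k\le 8(d-1)$, where each $D_i$ is obtained from $D_{i-1}$ by a single projective or normal transformation of $\mathbb R^{d-1}$. The geometric heart of the matter is the following realization lemma, which I would prove by induction on $d$: if $A$ is a realization of the $(d-1)$-cube inside a hyperplane $H\subset\mathbb R^d$ and $A'$ is obtained from $A$ by a projective transformation (resp.\ a normal transformation), then there is a \emph{single} $d$-cube $Q$, realized as a truncated cone (a ``frustum'') over $A$, whose bottom facet is $A$ and whose top facet is projectively equivalent to $A'$; moreover $Q$ may be taken arbitrarily thin, so that all of its side facets are as close as desired to being orthogonal to $H$. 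The point is that a truncated cone over a $(d-1)$-cube is combinatorially a $d$-cube --- the truncated cone over each facet of $A$ is again a truncated cone over a $(d-2)$-cube, hence a $(d-1)$-cube by induction, and together with $A$ and the opposite face these account for exactly the $2d$ facets --- and that the central-projection map from the apex realizes an essentially arbitrary projective map between the bottom and top hyperplanes; the normal-transformation case is obtained by pushing the apex to infinity in a suitable direction, i.e.\ taking $Q$ to be an oblique prism whose top facet is the prescribed facet-hyperplane translate of $A$.

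Granting the lemma, I would stack the frusta $Q_1,\dots,Q_k$: after building $Q_i$ with top facet $\widetilde D_i$ (projectively equivalent to $D_i$), apply a projective transformation of $\mathbb R^d$ to the next frustum so that its bottom facet coincides with $\widetilde D_i$, and glue. Since each $D_i$ is only determined up to projective equivalence, matching at the joints costs nothing, and the resulting tower has bottom facet $C_1$ and top facet projectively equivalent to $C_2$ (as the statement permits). To get $m\le 4d$ rather than $8(d-1)$, I would first merge consecutive transformations in the chain in pairs: two projective transformations compose to a projective one, two normal transformations compose to a normal one, and --- the delicate case --- a projective transformation followed by a normal transformation of $\mathbb R^{d-1}$ is still realized by a single frustum over $A$, because the family of top facets of $d$-cubes built over a fixed $A$ has far more than the $O(d^2)$ degrees of freedom needed to cover the composite orbit. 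Pairing the $\le 8(d-1)$ transformations halves the count to $\le 4(d-1)\le 4d$ cubes.

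For (b), place $F_1$ in a hyperplane $H$ with $P_1$ below it. By part (a) there is a $d$-tower $T$ whose bottom and top facets are projectively equivalent to $F_1$ and $F_2$; I would take its first cube to be a very thin frustum that is a cone over a copy of $F_1$ from a point just beyond the centroid of $F_1$ on the far side of $H$, and its last cube likewise thin. Apply a projective transformation of $\mathbb R^d$ to $T$ so that its bottom facet is exactly $F_1$ and $T$ lies above $H$; because the side facets of $T$ along the ridges of $F_1$ are nearly flush with $H$, the dihedral angles of $P_1\cup T$ along those ridges are all $<\pi$, so $P_1\cup T$ is convex across $F_1$. Symmetrically, the top facet of $T$ is a small copy of a projective image of $F_2$; choose the projective transformation $\phi$ of $\mathbb R^d$ so that $\phi(F_2)$ equals this top facet, $\phi(P_2)$ lies beyond it, and $\phi(P_2)$ is compressed toward $\phi(F_2)$ enough that the dihedral angles of $T\cup\phi(P_2)$ along the ridges of $\phi(F_2)$ are $<\pi$ as well. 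Then $P:=P_1\cup T\cup\phi(P_2)$ is convex, hence a polytope, and by construction $P_1\cap T=F_1$ and $\phi(P_2)\cap T=\phi(F_2)$ are the bottom and top facets of $T$.

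The step I expect to be the main obstacle is the quantitative bound $m\le 4d$ in (a): realizing a single transformation by a single frustum is clean, but compressing a projective transformation followed by a normal transformation into one stacked cube requires making honest the claim that ``top facet of a $d$-cube over $A$'' surjects onto the relevant bounded-dimensional family of deformations of $A$, which is a transversality/dimension count that has to be set up carefully --- and where one must also verify that the cube stays a genuine, non-degenerate convex cube throughout. The convexity bookkeeping in (b) is comparatively routine once the ``thin frustum'' flexibility from (a) is in hand.
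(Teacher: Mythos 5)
Your overall architecture --- run Theorem~\ref{thm:eq-cubes}(a) in dimension $d-1$ to get a chain $C_1=D_0,\dots,D_k=C_2$, realize steps of the chain by single $d$-cubes stacked into a tower, and glue with projective adjustments at the joints --- is the paper's architecture. But there are two genuine gaps, one of which you flag yourself.

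First, the step you identify as the main obstacle (compressing a projective transformation followed by a normal one into a single cube, via a surjectivity/transversality count) is indeed unproved, and it is also unnecessary. The statement of (a), and the gluing lemma (Lemma~\ref{lem:connected_sum}), only require facets \emph{up to projective equivalence}; so a projective step $D_{i-1}\to D_i$ costs \emph{zero} cubes --- the current top facet, being projectively equivalent to $D_{i-1}$, is already projectively equivalent to $D_i$, and Lemma~\ref{lem:connected_sum} lets you apply whatever projective map is needed before attaching the next cube. Only the normal steps require a cube each, and the chain from Proposition~\ref{prop:main} (for $(d-1)$-cubes) contains at most $4(d-1)\le 4d$ of them. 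This accounting gives the bound $m\le 4d$ outright, with no merging argument.

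Second, the case that actually needs a construction --- realizing a single \emph{normal} transformation as the bottom-to-top passage of one $d$-cube --- is not correctly handled by ``pushing the apex to infinity, i.e.\ taking an oblique prism.'' A cone from a point at infinity over $A$ is a prism whose top facet is a \emph{translate} (a congruent copy) of $A$; a general normal transformation moves the facet hyperplanes of $A$ independently and does not produce a translate. The correct object is $Q=\conv\bigl((A\times\{0\})\cup(A'\times\{1\})\bigr)$ for normally equivalent $A,A'$, and one must verify that this is a combinatorial $d$-cube; the paper does so (Lemma~\ref{lem:comb_cubes}) by writing down its inequality description explicitly, using that $A$ and $A'$ share the normal matrix. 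Your inductive ``truncated cone over each facet'' argument is set up for the frustum case and would need to be redone for this convex hull. Part (b) of your proposal is essentially a re-derivation of Lemma~\ref{lem:connected_sum} applied at the two ends of the tower, and is fine modulo the standard details.
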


We apply this cubical connected sum operation to the cubical polytopes constructed recently in~\cite{AdinKN18}; the $f$-vectors of the latter approach the extremal rays of Adin's cone, which is conjectured to contain all $f$-vectors of $d$-polytopes~\cite{Adin96}.
The following density result for $f$-vectors of cubical polytopes then follows:
Let $\square^d$ denote the $d$-cube and $f(P)$ denote the $f$-vector of polytope $P$. Let $\mathcal{A}_d$ be the Adin cone (its apex is $f(\square^d)$ and its dimension is $\lfloor d/2\rfloor$ by the cubical Dehn-Sommerville relations~\cite{Adin96}).
\begin{theorem}[Ray density in Adin's cone]\label{thm:dense-f}
	For any $\epsilon>0$ and any $x\in \mathcal{A}_d$ there exists a cubical $d$-polytope $P$ such that the angle $\measuredangle xf(\square^d)f(P)$ is smaller than $\epsilon$.
\end{theorem}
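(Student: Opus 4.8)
The plan is to combine the bounded cubical towers / cubical connected sum of Theorem~\ref{thm:cubical.conn.sum} with the cubical polytopes of~\cite{AdinKN18}. The mechanism is simple: iterating the cubical connected sum \emph{adds} $f$-vectors up to an error that is bounded by a constant depending only on $d$ \emph{per summand}, while~\cite{AdinKN18} provides, for each extremal ray of $\mathcal A_d$, cubical $d$-polytopes whose $f$-vectors point arbitrarily close to that ray and have arbitrarily large norm. So, connect-summing together enough copies of suitable such polytopes, the bounded per-summand error becomes negligible and the direction of the resulting $f$-vector can be steered arbitrarily close to any prescribed ray in $\mathcal A_d$.

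First I would set up coordinates adapted to the cubical Dehn--Sommerville relations: every $f$-vector of a cubical $d$-polytope lies in the affine hull $A$ of $\mathcal A_d$, which has dimension $\lfloor d/2\rfloor$, so for a cubical $d$-polytope $Q$ put $\overline Q := f(Q) - f(\square^d) \in L := A - f(\square^d)$. Next I would record the effect of one cubical connected sum on $f$-vectors: if $P_1,P_2$ are cubical $d$-polytopes and $P := P_1 \cup T \cup \phi(P_2)$ is the cubical connected sum through a $d$-tower $T$ of at most $4d$ cubes (Theorem~\ref{thm:cubical.conn.sum}(b)), then a face count at the two gluing interfaces --- the $f$-vector of $T$, the two identified $(d-1)$-cube facets, and the corrections for any facets that become coplanar and merge across an interface are all bounded by a constant depending only on $d$ --- gives
\[
	f(P) \;=\; f(P_1) + f(P_2) + e, \qquad \|e\| \le C(d).
\]
Since $P$ is again a cubical $d$-polytope, Theorem~\ref{thm:cubical.conn.sum}(b) reapplies, and iterating along cubical $d$-polytopes $Q_1,\dots,Q_N$ produces a cubical $d$-polytope $P^{(N)}$ with, after moving the repeated $f(\square^d)$-shifts into the error term,
\[
	\overline{P^{(N)}} \;=\; \sum_{j=1}^{N} \overline{Q_j} \;+\; \ell, \qquad \ell \in L, \quad \|\ell\| \le C'(d)\,N .
\]

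Now, given $\epsilon>0$ and $x\in\mathcal A_d$, I would dispose of $x = f(\square^d)$ trivially ($P = \square^d$) and otherwise write $\overline x := x - f(\square^d) = \sum_i \lambda_i r_i$ as a nonnegative combination of unit vectors $r_i$ along the extremal rays of $\mathcal A_d$. Fix parameters $\delta,M>0$; using~\cite{AdinKN18}, choose for each $i$ with $\lambda_i>0$ a cubical $d$-polytope $Q_i$ with $q_i := \|\overline{Q_i}\| \ge M$ and $\eta_i := \overline{Q_i}/q_i - r_i$ of norm $\|\eta_i\|\le\delta$. For a large integer $t$, build $P^{(N)}$ from $a_i := \lceil t\lambda_i/q_i\rceil$ copies of $Q_i$, so $N = \sum_i a_i \le t\sum_i\lambda_i/M$ and $a_iq_i = t\lambda_i + \theta_i q_i$ with $0\le\theta_i<1$; then
\[
	\overline{P^{(N)}} \;=\; t\,\overline x \;+\; \sum_i \theta_i q_i r_i \;+\; \sum_i a_i q_i \eta_i \;+\; \ell ,
\]
whence the deviation of $\overline{P^{(N)}}$ from $t\overline x$ has norm at most $(1+\delta)\sum_i q_i + \delta\,t\sum_i\lambda_i + C'(d)N$. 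Dividing by $\|t\overline x\| = t\|\overline x\|$ and using $N \le t\sum_i\lambda_i/M$, the relative deviation is at most $O(1/t) + (\delta + C'(d)/M)\sum_i\lambda_i/\|\overline x\|$; choosing $\delta$ small and $M$ large (independently of $t$), then $t$ large, drives it below any threshold, so that $\measuredangle x\, f(\square^d)\, f(P^{(N)})$ --- the angle between $\overline x$ and $\overline{P^{(N)}}$ --- is below $\epsilon$.

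The only step requiring genuine care is the uniform bound $\|e\|\le C(d)$: one must verify that across each of the two gluing interfaces only a bounded (in terms of $d$) number of faces are created, destroyed, or identified, accounting in particular for the possibility that facets of the glued pieces become coplanar and merge --- and that this bounded error also keeps the result cubical so the iteration can continue. Everything else is bookkeeping, contingent on~\cite{AdinKN18} supplying, for each extremal ray, cubical $d$-polytopes of unbounded $f$-vector norm whose direction converges to that ray, which is the precise sense in which those $f$-vectors ``approach the extremal rays of Adin's cone''.
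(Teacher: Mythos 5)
Your argument is correct, and it rests on the same two pillars as the paper's proof (the cubical connected sum of Theorem~\ref{thm:cubical.conn.sum} and the AKN polytopes of~\cite{AdinKN18}), but the way you steer the $f$-vector toward an arbitrary ray is genuinely different. The paper works in $\gc$-coordinates and uses the exact additivity of Lemma~\ref{lem:gc_of_cs} ($\gc_i(Q_1\# Q_2)=\gc_i(Q_1)+\gc_i(Q_2)$ for $2\le i\le\dhalf{d}$, with an explicit constant correction only at $i=1$); it then forms a \emph{single} connected sum with one AKN summand per coordinate, tuning the internal parameters of the AKN construction (via the asymptotics $\gc_i(Q(i,d,m))=2^mm^{i-1}+o(2^mm^{i-1})$, e.g.\ choosing $l=\lceil\log c+m+\log m\rceil$) so that the dominating entries realize the prescribed ratios $s_{i-1}/s_i$, proceeding backwards from the last coordinate. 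You instead freeze one AKN polytope per extremal ray, far enough along its sequence that its direction is $\delta$-close to the ray and its norm exceeds $M$, and tune the \emph{multiplicities} $a_i$ of the summands, absorbing all corrections into a per-summand error $C(d)$ that is beaten by taking $M$ large. Your route needs less from~\cite{AdinKN18} (only convergence of directions and unboundedness of norms, not the precise growth rates) and avoids the coordinate-by-coordinate induction, at the price of a polytope with $N\to\infty$ summands; the paper's route produces essentially explicit $\gc$-vectors from a bounded number of summands. Two minor points: the ``coplanar facets merging'' issue you flag does not arise, because Lemma~\ref{lem:connected_sum} already guarantees that the faces of the glued polytope are exactly the faces of the pieces minus the identified facet, so your error term is in fact exact, namely $f(C,t)-2f(\square^{d-1},t)-2t^{d-1}$; and since the passage from $f$-vectors to $\gc$-vectors is an affine isomorphism on the Dehn--Sommerville subspace sending $f(\square^d)$ to the apex, your $f$-vector bookkeeping and the paper's $\gc$-bookkeeping are interchangeable for the purpose of convergence of rays, which is all the theorem asks.
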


Lastly, we note that our cubical connected sum construction endows the set of $f$-vectors of cubical $d$-polytopes with the structure of an affine semigroup (see~\cite{Zieg18}).

\paragraph{Outline} In Section~\ref{sec:prelim} we give preliminaries, we prove Theorem~\ref{thm:eq-cubes} in Section~\ref{sec:eq-cubes}, Theorem~\ref{thm:cubical.conn.sum} in Section~\ref{sec:cubicalCS} and Theorem~\ref{thm:dense-f} in Section~\ref{sec:Adin.cone.density}. We conclude with generalizations and related open questions in Section~\ref{sec:concluding}.

\section{Preliminaries}\label{sec:prelim}
For further background on polytopes see e.g.~\cite{Zieg95}.
\subsection{Two notions of equivalence of $d$-polytopes}

Let $P=\set{x\in\R^d}{Ax\leq b}$ be a $d$-polytope, with the origin in its {\bf interior} $P^{\circ}$. Denote by $r_1,\dots,r_m$ the rows of $A$. By scaling we may assume these are the {\bf facet outer normals}. The {\bf polar polytope}
$$P^{\triangle}=\set{y\in\R^d}{\langle y,x\rangle\leq 1\text{ for all }x\in P}=\conv(p_1,\dots,p_m)$$
is the $d$-polytope with vertices $p_1=\frac{1}{b_1}r_1,\dots,p_m=\frac{1}{b_m}r_m$.

A {\bf projective transformation} is a map
$$\varphi:\R^d\longrightarrow\R^d$$
defined by
$$x \mapsto \frac{Ax+b}{c^Tx+\alpha},$$
for some $A\in M_{d\times d}(\R)$, $b,c\in\R^d$, and $\alpha\in\R$ that satisfy
$$
\det\begin{pmatrix}
A & b\\
c^T & \alpha
\end{pmatrix}\neq 0.
$$
These transformations form a group under composition.
\begin{definition}
	Two $d$-polytopes $P$ and $Q$ are {\bf projectively equivalent} if there is a projective transformation $\varphi$ such that $Q=\varphi(P)$.
\end{definition}

We will need another notion of equivalence:
\begin{definition}
	Two $d$-polytopes $P$ and $Q$ are {\bf normally equivalent} if they have the same set of facet outer normals.
\end{definition}
In this case we say $Q=\psi(P)$ for a normal transformation $\psi$.
Thus, given a polytope $P$, any two polytopes normally equivalent to it differ by a normal transformation.
On the dual polytopes we say $Q^{\triangle}=\psi^{\triangle}(P^{\triangle})$ for a {\bf ray transformation}  $\psi^{\triangle}$ (it scales the vertices along the rays from the origin while preserving the combinatorial type).

\subsection{Connected sums of $d$-polytopes}
Suppose $P$ and $Q$ are $d$-polytopes whose intersection is a common facet $F=P\cap Q$ of both.
If $R=P\cup Q$ is convex then its proper faces are precisely the proper faces of either $P$ or $Q$, excluding $F$:
$$\mathrm{faces}(R)=\left( \mathrm{faces}(P)\cup \mathrm{faces}(Q)\right)\setminus \{F\}.$$

The following lemma, a proof of which can be found in~\cite[Lemma 3.2.4]{Rich96}, tells us when and how the connected sum of two polytopes can be formed.
\begin{lemma}\label{lem:connected_sum}
	Let $P$ and $Q$ be $d$-polytopes that have projectively equivalent facets $F_1$ and $F_2$ respectively. Then there exists a projective transformation $\varphi$ so that $P\cap \varphi(Q)=F_1=\varphi(F_2)$ and $R=P\cup \varphi(Q)$ is convex.
\end{lemma}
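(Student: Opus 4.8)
The plan is the classical one: put $F_1$ into a coordinate hyperplane, carry $Q$ over by the given projective equivalence of facets, and then repair convexity by projectively ``flattening'' $Q$ toward an interior point of $F_1$. First I would choose affine coordinates on $\R^d$ so that $\aff(F_1)=\{x_d=0\}=:H$, so that $P\subseteq H^{+}:=\{x_d\ge 0\}$, and so that the origin lies in $\mathrm{relint}(F_1)$ --- a harmless change of coordinates. Using that $F_1$ and $F_2$ are projectively equivalent, fix a projective transformation $\tau$ with $\tau(F_2)=F_1$. Since $\tau(Q)$ is a $d$-polytope having $F_1\subseteq H$ among its facets, it lies in one of the two closed half-spaces bounded by $H$ and meets $H$ exactly in $F_1$; composing $\tau$ with the reflection in $H$ if needed (an affine, hence projective, map fixing $H$ pointwise), I may assume $\tau(Q)\subseteq H^{-}:=\{x_d\le 0\}$. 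Then $P\cap\tau(Q)=F_1$ automatically, since the two bodies lie on opposite sides of $H$ and meet $H$ in $F_1$; only convexity of the union can fail.

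The second ingredient is a convexity criterion. For a facet $G$ of $P$, write $H_G^{\ge}$ for the closed half-space of $G$ containing $P$. I claim: if $Q'$ is a $d$-polytope with $Q'\subseteq H^{-}$, $Q'\cap H=F_1$, and $Q'\subseteq H_G^{\ge}$ for every facet $G\ne F_1$ of $P$, then $P\cup Q'$ is convex. Indeed, under these hypotheses $\conv(P\cup Q')$ lies in every $H_G^{\ge}$, so $\conv(P\cup Q')\cap H\subseteq\bigcap_{G\ne F_1}(H_G^{\ge}\cap H)=F_1$ --- the last equality because the traces on $H$ of the half-spaces of the facets of $P$ meeting $F_1$ in a ridge already cut $F_1$ out inside $H$, while the remaining traces contain $F_1$. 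A short argument tracking where a segment from a point of $P$ to a point of $Q'$ can cross $H$ then upgrades $\conv(P\cup Q')\cap H=F_1$ to $\conv(P\cup Q')=P\cup Q'$.

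The heart of the proof is producing a suitable $Q'=\psi(\tau(Q))$. I would take, for $t<0$,
\[
\psi_t(x):=\frac{x}{1+t\,x_d},
\]
which is a genuine projective transformation (linear with nonzero determinant in homogeneous coordinates), fixes $H$ pointwise, preserves the sign of the last coordinate, and has denominator $\ge 1$ on $H^{-}$; hence $\psi_t$ sends $\tau(Q)\subseteq H^{-}$ to a combinatorially equivalent $d$-polytope in $H^{-}$ whose facet $F_1\subseteq H$ is unmoved. Each vertex $v$ of $\tau(Q)$ not lying in $F_1$ has $v_d<0$, so $\psi_t(v)\to 0$ as $t\to-\infty$; writing $B_t$ for the set of these images, $\psi_t(\tau(Q))=\conv(F_1\cup B_t)$ with $B_t\to\{0\}$. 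For every facet $G\ne F_1$ of $P$ one has $\mathrm{relint}(F_1)\subseteq\mathrm{int}(H_G^{\ge})$ --- because $\aff(G)$ supports $P$ along $G$ while $G\cap F_1$ is a proper face of $F_1$, so $\mathrm{relint}(F_1)$ misses $\aff(G)$ --- and in particular $0\in\mathrm{int}(H_G^{\ge})$. As there are finitely many such $G$ and $B_t\to\{0\}$, for $t$ sufficiently negative $B_t\subseteq\bigcap_{G\ne F_1}\mathrm{int}(H_G^{\ge})$, whence by convexity $\psi_t(\tau(Q))=\conv(F_1\cup B_t)\subseteq H_G^{\ge}$ for every such $G$. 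The criterion now applies to $Q'=\psi_t(\tau(Q))$, so $\varphi:=\psi_t\circ\tau$ is the desired transformation: $\varphi(F_2)=F_1=P\cap\varphi(Q)$ and $P\cup\varphi(Q)$ is convex.

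I expect the only genuinely delicate step to be the convexity criterion --- the bookkeeping of the facets of $P$ and the verification that $\conv(P\cup Q')\cap H=F_1$ forces the union to be convex via the crossing-segment argument. Once that is in place, choosing the flattening parameter and passing to the limit is routine, and, as emphasized in the introduction, no transformation along the way takes us outside the world of polytopes.
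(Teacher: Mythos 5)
The paper does not actually prove this lemma; it defers to Richter--Gebert \cite[Lemma 3.2.4]{Rich96}, and your argument is essentially that standard proof: normalize so that $\aff(F_1)=H$ separates the two bodies, then apply $x\mapsto x/(1+tx_d)$, which fixes $H$ pointwise and, as $t\to-\infty$, pulls the vertices of $\tau(Q)$ off $F_1$ toward a point of $\mathrm{relint}(F_1)$, which lies strictly beneath every facet of $P$ other than $F_1$. Your convexity criterion is correct as stated --- the one-sided beneath condition really does suffice, because once $\conv(P\cup Q')\cap H=F_1$ is established, every segment from a point of $P$ to a point of $Q'$ crosses $H$ inside $F_1$ and hence splits into a subsegment of $P$ and a subsegment of $Q'$ --- and the finiteness argument for choosing $t$ is fine.

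The one step you pass over is the very first: a projective equivalence of the facets is a priori a projective transformation carrying $\aff(F_2)$ onto $\aff(F_1)$, and an arbitrary extension of it to all of $\R^d$ need not be admissible for $Q$ --- the hyperplane it sends to infinity may cut through $Q$, in which case $\tau(Q)$ is not a bounded polytope and the rest of the construction has nothing to act on. This is repairable: the extension leaves free the choice of which hyperplane through the prescribed $(d-2)$-flat $L\subseteq\aff(F_2)$ (the preimage of infinity under the facet map) is sent to infinity, and since $L$ is disjoint from the compact convex set $Q$, one can rotate that hyperplane about $L$ until it misses $Q$. This deserves a sentence in a complete write-up, but it does not affect the structure of your proof.
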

The combinatorial type of $R$ in Lemma~\ref{lem:connected_sum}
is called the {\bf connected sum} of $P$ and $Q$ along $F_1$ and $F_2$,
denoted $P\#_{F_1\sim F_2} Q$, or simply $P\#_F Q$ when the faces $F_1,F_2$ combinatorially isomorphic to $F$ are understood.

\subsection{Cubical polytopes}
We give just a brief reminder of the definitions of a cubical $d$-polytope and its $\hc$-vector and $\gc$-vector. For more details, in particular, for the construction used in section~\ref{sec:Adin.cone.density}, see~\cite{AdinKN18}.

A $d$-polytope $Q$ is {\bf cubical} if each of its proper faces is combinatorially a cube. Its {\bf $f$-polynomial} is defined by (note the shift of index!)
$$
f(Q,t)=\sum_{i=0}^{d-1} f_it^i
$$
where $f_i=f_i(Q)$ is the number of $i$-dimensional faces of $Q$.

We then define the {\bf short cubical $h$-polynomial}:
$$
\hsc(Q,t)=(1-t)^{d-1} f\left(Q,\frac{2t}{1-t}\right),
$$
and the {\bf cubical $h$-polynomial}
$$
\begin{aligned}
\hc(Q,t)=\sum_{i=0}^d \hc_it^i &=\frac{t(1-t)^{d-1}}{1+t} f\left(Q,\frac{2t}{1-t}\right) + 2^{d-1}\frac{1-(-t)^{d+1}}{1+t}.
\end{aligned}
$$

Adin~\cite{Adin96} has shown that $\hc(Q,t)$ is symmetric, that is
\begin{equation*}
\hc_i=\hc_{d-i}\quad (0\leq i\leq d).
\end{equation*}

These $\duhalf{d}$ equations are the {\bf cubical Dehn--Sommerville relations}.
We thus define the {\bf cubical $g$-vector} $\gc(Q)=(\gc_0,\dots,\gc_{\dhalf{d}})$ by
\begin{align*}
\gc_0=\hc_0=2^{d-1},& &  \gc_i&=\hc_i -\hc_{i-1}\quad\text{for } 1\leq i\leq\dhalf{d}.
\end{align*}
Adin conjectured
\begin{conjecture}[Question 2 in~\cite{Adin96}]
	For a cubical $d$-polytope we have
	\begin{equation}\label{eq:Adin.cone}
	\gc_i\geq 0\quad (1\leq i\leq \dhalf{d}).
	\end{equation}
\end{conjecture}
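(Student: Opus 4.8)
The plan to prove this conjecture is to transport the proof of the $g$-theorem for simplicial polytopes to the cubical world. First one wants a graded $\R$-algebra $A=\bigoplus_{i\ge 0}A_i$ attached to a cubical $d$-polytope $Q$ whose Hilbert function records the cubical $h$-vector, $\dim_\R A_i=\hc_i(Q)$ for $0\le i\le\dhalf{d}$. A natural candidate comes from the short cubical $h$-vector: one shows that $\hsc(Q,t)$ is the ordinary $h$-polynomial of a simplicial $(d-1)$-sphere $\Delta(Q)$ canonically associated to $\partial Q$ (this is one way to see $\hsc\ge 0$), takes $A$ to be the Stanley--Reisner ring of $\Delta(Q)$ modulo a generic linear system of parameters, and recovers $\hc$ from $\hsc$ by the formula of Section~\ref{sec:prelim}; a geometric alternative is to use the Chow ring of a projective fan refining the normal fan of $Q^{\triangle}$, or McMullen's polytope algebra \cite{McMu93}. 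Under either dictionary the cubical Dehn--Sommerville relations become Poincar\'e duality for $A$, and the conjectural inequality $\gc_i=\dim_\R A_i-\dim_\R A_{i-1}\ge 0$ for $i\le\dhalf{d}$ becomes the assertion that multiplication by a generic $\ell\in A_1$ is \emph{injective} in degrees below the middle --- a Hard (or at least Weak) Lefschetz property for $A$.

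Second, one must establish this injectivity. Since cubes of dimension $\ge 3$ are not unimodular, the fans involved are singular and classical Hard Lefschetz does not apply; one would instead invoke the combinatorial Hard Lefschetz machinery --- the decomposition theorem for possibly non-smooth projective fans, or a direct argument inside the polytope/weight algebra --- to produce the Lefschetz element. A more elementary, and more paper-intrinsic, alternative is to induct using the cubical connected sum of Theorem~\ref{thm:cubical.conn.sum}: the base case is the cube, where $\gc(\square^d)=(2^{d-1},0,\dots,0)$ satisfies \eqref{eq:Adin.cone}, and for the inductive step one uses that every facet of a cubical $d$-polytope is a combinatorial $(d-1)$-cube, so the gluing facet $F$ in a connected sum $R=P\#_F Q$ is always a $(d-1)$-cube and the difference $\gc(R)-\gc(P)-\gc(Q)$ is a \emph{universal} vector depending only on $d$. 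One computes this correction from the definitions of $\hc$ and $\hsc$ --- in degree $1$ it equals $+2^{d-1}$, so connected sum only increases $\gc_1$ --- and it remains to check that it is coordinatewise nonnegative in degrees $1,\dots,\dhalf{d}$; granting this, every cubical polytope obtained from cubes by iterated connected sums satisfies \eqref{eq:Adin.cone}.

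The main obstacle is that this inductive route reaches only a proper subclass of cubical polytopes --- those built from cubes by iterated connected sums and stackings, such as the capped cubical polytopes --- while genuinely new cubical polytopes, neighborly cubical polytopes and the like, are not connected sums of cubes along facets and must be handled algebraically. There is at present no cubical analogue of the $g$-theorem, and even the low-degree cases ($\gc_1\ge 0$, a cubical lower-bound statement for vertices, and $\gc_2\ge 0$) are nontrivial; supplying a Lefschetz element for the cubical Stanley--Reisner ring of $\Delta(Q)$, or a combinatorial Hard Lefschetz theorem for the singular fans attached to cubical polytopes, is exactly what is missing, which is why the assertion is stated here as a conjecture. (The density result of Theorem~\ref{thm:dense-f} is consistent with, but does not resolve, this conjecture.)
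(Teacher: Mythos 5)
The statement you were asked to prove is stated in the paper as a \emph{conjecture} (Adin's Question~2), and the paper contains no proof of it; it remains open. Your proposal is therefore correctly not a proof, and to your credit you say so explicitly. The concrete missing step is exactly the one you name: in the algebraic route, the injectivity of multiplication by a generic degree-one element below the middle degree (a weak Lefschetz property) for some graded algebra whose Hilbert function is $\hc(Q)$. No such algebra is even known to exist in general: the natural simplicial sphere attached to $\partial Q$ (via vertex figures or a pulling triangulation) computes the \emph{short} cubical $h$-vector, and since $\hsc_i=\hc_i+\hc_{i+1}$, nonnegativity or unimodality of $\hsc$ does not transfer to $\gc_i\geq 0$; your phrase ``recovers $\hc$ from $\hsc$ by the formula'' hides a genuine obstruction, not a routine translation. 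Moreover, the fans attached to cubical polytopes are singular, so even the decomposition-theorem machinery you invoke is not known to produce the required element.

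Your second, inductive route is sound as far as it goes, and your computation of the connected-sum correction ($+2^{d-1}$ in degree $1$, zero in degrees $2,\dots,\dhalf{d}$) matches what the paper derives inside the proof of Lemma~\ref{lem:gc_of_cs} for gluing along a $(d-1)$-cube facet. But, as you also acknowledge, this only verifies \eqref{eq:Adin.cone} for the subclass of cubical polytopes obtainable from cubes by iterated connected sums and stackings; the paper uses this mechanism only to prove the density statement of Theorem~\ref{thm:dense-f}, not the conjecture itself. So the status is: no gap in your self-assessment, but no proof either --- the Lefschetz-type input (or any substitute covering, e.g., neighborly cubical polytopes) is precisely what is missing, and why the paper leaves the statement as a conjecture.
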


The cone~\eqref{eq:Adin.cone} is the nonnegative orthant in $\R^{\dhalf{d}}$, and its image under the map transforming $\gc$-vectors back into $f$-vectors yields the Adin cone $\mathcal{A}_d$ in $\R^d$.

In~\cite{AdinKN18}, for each $1\leq i\leq\dhalf{d}$, the authors exhibit a sequences of cubical $d$-polytopes whose corresponding sequence of $\gc$-vectors approaches the ray spanned by $e_i$. This translates into sequences of $f$-vectors approaching the extremal rays of $\mathcal{A}_d$.

\section{Any two combinatorial $d$-cubes are related by normal and projective transformations}\label{sec:eq-cubes}

We will use the following lemma, which describes the effect of a projective transformation on the polar polytope.
\begin{lemma}\label{lem:new0}
	Let $P$ be a $d$-polytope with $0\in P^{\circ}$, and $P^{\triangle}=\conv(p_1,\dots,p_m)$. Then for any $v\in P^{\circ}$ there exists a $d$-polytope $Q$ which is projectively equivalent to $P$, and $Q^{\triangle}=\conv(p_1+v,\dots,p_m+v)$.
\end{lemma}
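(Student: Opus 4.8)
The plan is to exhibit the projective transformation explicitly on the primal side and then track what it does to the polar. Recall that a projective transformation $\varphi$ acting on $P$ corresponds, on the level of polars, to a \emph{linear} transformation acting on $P^{\triangle}$ together with the choice of a new interior point playing the role of the origin; concretely, if $P=\{x : \langle r_j, x\rangle \le 1\}$ (after the scaling that makes $b_j=1$, i.e. the $p_j=r_j$ are exactly the vertices of $P^{\triangle}$), then translating the hyperplane data is dual to translating the vertices. So I would start from the standard duality dictionary: for $v\in P^\circ$, the point $v$ being interior means $\langle p_j, v\rangle < 1$ for all $j$, and hence $1-\langle p_j, v\rangle>0$ for every $j$, which is the positivity we will need to divide by.

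The key computation is the following. Consider the projective map $\varphi:\R^d\to\R^d$ given by
\[
\varphi(x)=\frac{x}{1-\langle v,x\rangle}.
\]
This is of the admissible form $x\mapsto (Ax+b)/(c^Tx+\alpha)$ with $A=I$, $b=0$, $c=-v$, $\alpha=1$, and its matrix $\begin{pmatrix} I & 0\\ -v^T & 1\end{pmatrix}$ has determinant $1\ne 0$, so $\varphi$ is indeed a projective transformation. It is defined on all of $P$ (and a neighborhood), since on $P$ we have $\langle v,x\rangle<1$ by the interiority of $v$, so the denominator never vanishes; thus $Q:=\varphi(P)$ is a bounded $d$-polytope projectively equivalent to $P$. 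It remains to identify $Q^{\triangle}$. A point $y$ lies in $Q^{\triangle}$ iff $\langle y,\varphi(x)\rangle\le 1$ for all $x\in P$, i.e. $\langle y,x\rangle \le 1-\langle v,x\rangle$, i.e. $\langle y+v, x\rangle\le 1$ for all $x\in P$ — that is, $y+v\in P^{\triangle}$. Hence $Q^{\triangle}=P^{\triangle}-v=\conv(p_1-v,\dots,p_m-v)$.

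This gives the translate by $-v$; to land on $\conv(p_1+v,\dots,p_m+v)$ as stated, one simply applies the same argument with $-v$ in place of $v$, noting that $-v\in P^\circ$ need not hold, so instead I would phrase it symmetrically: for any $w$ with $\langle p_j,w\rangle>-1$ for all $j$ (equivalently $-w\in P^\circ$), the map $x\mapsto x/(1+\langle w,x\rangle)$ is a well-defined projective transformation on $P$ whose polar is translated by $+w$; taking $w=v$ is legitimate precisely when $-v\in P^\circ$. If the intended statement really is for $v\in P^\circ$ with a $+v$ translate, one either replaces $P^\triangle$ by its reflection through the origin at the start, or observes $0\in P^\circ$ forces $-v\in P^\circ$ to fail in general — so the honest statement is ``translate by any vector whose negative is interior to $P$,'' and the sign in the lemma is a harmless normalization. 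The only genuine point to verify carefully is boundedness of $Q$, i.e. that $c^Tx+\alpha$ has constant sign on $P$; this follows from $0\in P^\circ$ together with $v\in P^\circ$ (resp. $-v\in P^\circ$), since then the affine functional $1-\langle v,x\rangle$ is positive at $x=0$ and nonvanishing throughout $P$, hence positive on $P$. I expect this sign/boundedness bookkeeping — rather than the duality computation — to be the only place needing care.
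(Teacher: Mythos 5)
Your overall route is the same as the paper's: exhibit the explicit fractional--linear map with identity linear part, $\varphi(x)=x/(1-\langle v,x\rangle)$, and compute its effect on the polar directly. The duality computation itself is fine, \emph{granted} positivity of the denominator on $P$. But the step you yourself single out as ``the only place needing care'' is exactly where the argument breaks. The hypothesis $v\in P^{\circ}$ gives $\langle p_j,v\rangle<1$ for the \emph{vertices} $p_j$ of $P^{\triangle}$; what you need for $1-\langle v,x\rangle$ to be positive throughout $P$ is $\langle v,x\rangle<1$ for all $x\in P$, which says $v\in (P^{\triangle})^{\circ}$ --- a genuinely different condition. You conflate the two: in your first paragraph you correctly derive $1-\langle p_j,v\rangle>0$, and in the second you silently upgrade this to $1-\langle v,x\rangle>0$ on $P$ ``by the interiority of $v$''. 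This is false: take $P=[-10,10]$ (so $P^{\triangle}=[-\tfrac1{10},\tfrac1{10}]$) and $v=5\in P^{\circ}$; then $1-\langle v,x\rangle$ vanishes at $x=\tfrac15\in P$ (a box $[-10,10]^d$ with $v=5e_1$ gives the same in any dimension). Your proposed ``honest statement'' --- that $-w\in P^{\circ}$, i.e.\ $\langle p_j,w\rangle>-1$ for all $j$, makes $x\mapsto x/(1+\langle w,x\rangle)$ well defined on $P$ --- has the identical defect: the condition $\langle p_j,w\rangle>-1$ lives on $P^{\triangle}$, whereas well-definedness needs $\langle w,x\rangle>-1$ on all of $P$. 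So your repaired statement is still false as written.

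The correct hypothesis is the obviously necessary one: the translated polar must still contain the origin in its interior, i.e.\ $0\in\conv(p_1+v,\dots,p_m+v)^{\circ}$, equivalently $-v\in(P^{\triangle})^{\circ}$, equivalently $\langle v,x\rangle>-1$ for all $x\in P$. Under that hypothesis your computation, run with the map $x\mapsto x/(1+\langle v,x\rangle)$, goes through verbatim and shows sufficiency. To be fair, the paper's statement also writes $v\in P^{\circ}$, and its proof is a purely formal computation of $\varphi^{\triangle}$ that never addresses well-definedness; but where the lemma is actually applied (steps (1) and (3) of Proposition~\ref{prop:main}) it is invoked to move the origin to a chosen interior point of the polar crosspolytope, i.e.\ exactly under the hypothesis $-v\in(P^{\triangle})^{\circ}$. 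In short: right map, right dual computation, but you verified the wrong inequality at the one step you flagged as critical.
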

\begin{proof}
	Consider the effect of a projective transformation $\varphi:\R^d\longrightarrow\R^d$ that takes $P$ to $Q$ (with $0\in Q^{\circ}$) on the polar polytopes $P^{\triangle}$ and $Q^{\triangle}$. It is easy check that the map
	$$\varphi^{\triangle}:\R^d\longrightarrow\R^d$$
	defined by
	$$x \mapsto \frac{A^Tx-c}{-b^Tx+\alpha}$$
	where $(\cdot)^T$ denotes the transpose, is a projective transformation that satisfies
	$$\varphi^{\triangle}(Q^{\triangle})=P^{\triangle}.$$
	Denote $\varphi^{-\triangle}=(\varphi^{\triangle})^{-1}$, so that
	$$Q^{\triangle}=\varphi^{-\triangle}(P^{\triangle}).$$
	Taking $A=I_{d\times d}$, $b=0$, $c=v$, and $\alpha=1$ produces a projective transformation $\varphi$ for which
	$$\varphi^{-\triangle}(x)=x+v$$
	and the claim follows.
\end{proof}

Let $Q=\set{x\in\R^d}{Ax\leq b}$ be a combinatorial $d$-cube, with the origin in its interior, and  $r_1,\dots,r_{2d}$ the rows of $A$, that is, the facet outer normals. We may assume that they are ordered by pairs of combinatorially opposite facets, that is, $r_i$ is normal to a facet opposite to the facet normal to $r_{i+1}$, for $i=1,3,\dots,2d-1$. The polar polytope $Q^{\triangle}$ is the combinatorial $d$-crosspolytope with vertices $p_1=\frac{1}{b_1}r_1,\dots,p_{2d}=\frac{1}{b_{2d}}r_{2d}$, and we denote by $l_i=[p_{2i-1},p_{2i}]$, for $1\leq i\leq d$, the line segments connecting the pairs of opposite vertices. The following proposition proves Theorem~\ref{thm:eq-cubes}$(a)$.

\begin{proposition}\label{prop:main}
	Let $Q$ and $Q'$ be two combinatorial $d$-cubes. Then there is a sequence $\phi_1,\dots\phi_s$ ($s\le 8d-1$) of projective and normal transformations such that
	$$Q'=(\phi_s\circ\dots\circ\phi_1)(Q).$$
\end{proposition}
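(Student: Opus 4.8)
The plan is to pass to a normal form. It suffices to produce, for an arbitrary combinatorial $d$-cube $Q$, a sequence of at most $4d$ projective and normal transformations carrying $Q$ to the standard cube $[-1,1]^d$, the last of which is projective: doing this for $Q$ and for $Q'$ and then reversing the second sequence gives a chain of length at most $8d$ from $Q$ through $[-1,1]^d$ to $Q'$, and since the last step of the first half and the first step of the second half are both projective they compose to a single projective transformation, leaving $s\le 8d-1$. (If $0\notin Q^{\circ}$ one first translates; this affine step merges into the first projective step.)

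I would carry out the normalization in the polar picture. By polarity a combinatorial $d$-cube corresponds to a combinatorial $d$-crosspolytope $K=Q^{\triangle}=\conv(p_1,\dots,p_{2d})$, recorded by its $d$ main diagonals $l_i=[p_{2i-1},p_{2i}]$; projective transformations of $Q$ correspond bijectively, via $\varphi\mapsto\varphi^{-\triangle}$ as in the proof of Lemma~\ref{lem:new0}, to projective transformations of $K$, and normal transformations of $Q$ to ray transformations of $K$, i.e.\ independent positive rescalings $p_j\mapsto s_jp_j$ that keep $K$ a realization of the crosspolytope. The target is the standard crosspolytope, with diagonals $[e_i,-e_i]$. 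One preliminary observation is worth isolating: the $d$ diagonal directions $p_{2i-1}-p_{2i}$ of any combinatorial crosspolytope are linearly independent --- a nontrivial linear relation among them exhibits two vertex-disjoint faces of $K$ whose convex hulls meet, which is impossible --- so once the pairs $\{p_{2i-1},p_{2i}\}$ sit at $\{e_i,-e_i\}$ for all $i$ we are finished.

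The heart of the argument is an iteration over $i=1,\dots,d$ in which stage $i$ moves the active pair $\{p_{2i-1},p_{2i}\}$ onto $\{e_i,-e_i\}$ while fixing the previously placed pairs $\{\pm e_1\},\dots,\{\pm e_{i-1}\}$, spending at most four transformations. Stage $1$ is the base case (a translation centering $l_1$, followed by a linear map). For $i\ge 2$, two kinds of moves are available and automatically respect the earlier placements: ray transformations that rescale only $p_{2i-1}$ and $p_{2i}$, and projective transformations fixing each of $e_1,-e_1,\dots,e_{i-1},-e_{i-1}$, a family one checks is still positive-dimensional, consisting of maps $x\mapsto Ax/(c^{\mathsf T}x+1)$ whose first $i-1$ columns of $A$ are $e_1,\dots,e_{i-1}$ and with $c\in\vspan(e_i,\dots,e_d)$. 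Using the first kind to bring the active pair into sufficiently general position and the second kind to steer $p_{2i-1}$ to $e_i$, and alternating once or twice more to pin $p_{2i-1}$ and $p_{2i}$ simultaneously to $e_i$ and $-e_i$, one completes stage $i$; arranging the last stage to end on a projective move (or appending a concluding linear map) yields a chain of the required form of length $\le 4d$, and unwinding the polar duality transfers it to $Q$.

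The step I expect to be the main obstacle is exactly this quantitative claim in the iteration: that each stage can be completed within a bounded number of moves while (i) never letting $K$ degenerate --- one must stay inside the realization space of the crosspolytope, keeping $0$ in the interior and the combinatorial type fixed, which confines the admissible rescalings and projective maps to open regions --- and (ii) not disturbing the pairs already at $\{\pm e_j\}_{j<i}$. The tension is structural: a projective transformation moves all diagonals at once and preserves incidences, so alone it cannot, for instance, make two skew diagonals concurrent, whereas a ray transformation is local to one pair but can never slide a diagonal onto the origin; only a careful alternation of the two, exploiting that the diagonal directions remain independent so that no admissible rescaling forces a collapse, can push the active pair to its target without spoiling the rest. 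Reducing the number of moves per stage to four --- and thereby the total to $8d-1$ --- together with handling degenerate positions of the active pair (say when it lies in a coordinate hyperplane) by an extra preparatory move, is the delicate bookkeeping I would expect to occupy most of the work.
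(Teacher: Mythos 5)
Your high-level strategy coincides with the paper's: pass to the polar crosspolytope, handle the $d$ antipodal pairs one at a time with an alternation of projective and ray transformations costing four moves per pair, and obtain $8d-1$ by concatenating the two normalizing sequences and merging the two adjacent moves of the same type at the junction. But the proof has a genuine gap exactly where you flag it yourself: the four-move routine for stage $i$ is asserted, not constructed. Saying one should "bring the active pair into sufficiently general position" with a ray move and then "steer $p_{2i-1}$ to $e_i$" with a projective map fixing $\pm e_1,\dots,\pm e_{i-1}$, "alternating once or twice more," is not an argument; nothing is shown about why the required maps exist inside the open region where $K$ stays a crosspolytope with $0$ in its interior, nor why a bounded number of alternations suffices. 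Worse, the invariant you chose makes the task strictly harder than it needs to be: you insist that each completed pair sit exactly at $\{\pm e_j\}$ and be fixed pointwise thereafter, which forces you into the small family of projective maps fixing $2(i-1)$ prescribed points, and it is not clear this family is rich enough to finish a stage in four moves.

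The paper avoids this by weakening both the target and the invariant. Stage $i$ consists of: (1) a projective move (Lemma~\ref{lem:new0}) placing the origin on the interior of $l_i$; (2) a ray move stretching one endpoint $p_{2i-1}$ far enough along its ray that some affine hyperplane $H_i$ orthogonal to $l_i$ strictly separates it from all other vertices; (3) a projective move relocating the origin past $H_i$ toward $p_{2i-1}$, so that $H_i$ meets every ray $\vspan(p_j)$, $j\neq 2i-1,2i$, inside the polytope; (4) a ray move pushing all those vertices onto $H_i$. The invariant preserved is not the position of earlier pairs but only the property that each completed segment $l_j$ is orthogonal to a hyperplane containing all the other segments --- this survives because later moves either translate everything or rescale along rays through points of those hyperplanes. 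The terminal normal form is then a crosspolytope with pairwise orthogonal, concurrent diagonals, i.e.\ one that is merely \emph{ray-equivalent} to the standard crosspolytope; the two halves are stitched together by one final normal transformation, and the saving to $8d-1$ comes from merging three consecutive normal moves rather than two projective ones. If you want to salvage your version, you would either have to supply the missing quantitative argument for your restricted projective family, or relax your invariant along these lines.
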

\begin{proof}
	We present the sequence in terms of the polar $d$-crosspolytopes. For each pair of antipodal vertices of $P:=Q^{\triangle}$ we perform a sequence of $4$ transformations, alternating between projective and ray transformations,	arriving at a $d$-crosspolytope which is ray equivalent to the standard $d$-crosspolytope, namely the convex hull of the standard basis elements and their minuses. We refer to the sequence of $4$ transformations for the $i$-th pair of antipodal vertices as the {\em $i$-th iteration}. We denote the crosspolytope obtained after the $i$-th iteration by $P^{(i)}$, its vertices by $p_1^{(i)},p_2^{(i)},\dots ,p_{2d}^{(i)}$ and the line segments connecting its pairs of opposite vertices $p_{2j-1}^{(i)},p_{2j}^{(i)}$ by $l_j^{(i)}$ for $j=1,2,\dots d$.
	
	\begin{enumerate}
		\item Use Lemma~\ref{lem:new0} to translate the crosspolytope $P^{(i-1)}$ so that the origin lies on the interior of the line segment $l_i^{(i-1)}=[p_{2i-1}^{(i-1)},p_{2i}^{(i-1)}]$, say on its mid point to make a canonical choice; this projective transformation produces a polytope $P'$.
		
		\item For $P'=\{x|\ Ax\le b\}$ with vertex notation as in $P^{(i-1)}$, choose $c_{2i-1}\geq\frac{1}{b_{2i-1}}$ so that there exists an affine hyperplane $H_i$ orthogonal to $l_i$, which strictly separates $q_{2i-1}:=c_{2i-1}r_{2i-1}$ from $\vertices(P')\setminus\{p_{2i-1}\}$.
To make a canonical choice, let $c$ be the infimum of the possible values for such $c_{2i-1}$s, fix $c_{2i-1}=c+1$ and fix the $H_i$ as above that intersects the ray spanned by $q_{2i-1}$ at $(c+0.5)r_{2i-1}$.

Set $P''=\conv(q_{2i-1}\cup P')$. Then $P''$ is ray equivalent to $P'$.
		
		\item Again denote the vertices of $P''$ by $p_i$, in correspondence with the vertices of $P^{(i-1)}$, so $p_{2i-1}=q_{2i-1}$. Use again Lemma~\ref{lem:new0} to move the origin close enough to $p_{2i-1}$ along the segment $l_i$, that is, so that the origin and $p_{2i-1}$ are on the same side of the hyperplane $H_i$ of step (2). To make a canonical choice, move the origin to $(c+0.7)r_{2i-1}$.
Then
		$$
		H_i\cap P'' \cong \conv(H_i\cap \vspan(p_j)\mid j\in [2d]\setminus\{2i-1,2i\}).
		$$
		(Here $\cong$ means combinatorially equivalent). The resulted polytope $P'''$ is projectively equivalent to $P''$.
		
		\item Set $q_j:=H_i\cap \vspan(p_j)$ for $j\in [2d]\setminus\{2i-1,2i\}$. Then\\ $P^{(i)}=\conv(q_1,\ldots,q_{2i-2},p_{2i-1},p_{2i},q_{2i+1,\ldots,q_2d})$
		is ray equivalent to $P'''$.
	\end{enumerate}
	
	%
	%
	%
	
	The resulted crosspolytope $P^{(i)}$ has the property that all line segments, except $l^{(i)}_i$ lie on the hyperplane $H_i$, which is orthogonal to $l^{(i)}_i$. Furthermore, for all previous line segments $l^{(i)}_1,\dots,l^{(i)}_{i-1}$ the same property, achieved at the previous iterations, i.e., that all other line segments lie on the hyperplanes $H_1,\dots,H_{i-1}$ (respectively) still holds, because (i) these hyperplanes are spanned by rays, and the new points we choose at the $i$-th iteration are on the rays, and further (ii) the $1$st up to $(i-1)$th segments are just translated in the $i$th iteration.
	
	After performing this process for every pair of antipodal vertices we obtain a combinatorial $d$-crosspolytope, with segments $l_1,\dots,l_d$, such that, for each $1\leq i\leq d$, there exists an affine hyperplane $H_i$, which is orthogonal to $l_i$, and contains all other segments $l_j$, $j\neq i$. It follows that the segments $l_1,\dots,l_d$ all intersect in a point, and are pairwise orthogonal.
	
	To see that the line segments are pairwise orthogonal, note that if the line segment $l_i$ was orthogonal to the line segment $l_j$ before performing the $i$-th iteration, then the new line segment $l_i^{(i)}$ is orthogonal to the new line segment $l_j^{(i)}$.
	
	To see that all line segments intersect in a point, consider the affine space spanned by the line segments, constructed sequentially. We start with some line segment (it spans an affine space of dimension $1$), then add a second line segment, which can raise the dimension by $0$, $1$ or $2$, and so on. Note that at each step the dimension cannot grow by $0$ (because each line segment is orthogonal to the space spanned by all other segments), and since we have $d$ line segments in $\R^d$ the total dimension is at most $d$, so at each step the dimension cannot grow by $2$ either, thus, at each step, the dimension grows by $1$. Note that this argument is valid for any ordering of the line segments, so any two of the segments intersect in a point. Using their pairwise orthogonality, this is the same point for all pairs of segments.
	
	We perform the same procedure for $Q'^{\triangle}$ to get a combinatorial $d$-crosspolytope which is normally equivalent to the standard $d$-crosspolytope. To finish, we do a final normal transformation to concatenate the two sequences of transformations performed on $Q$ and on $Q'$. In fact, the resulted 3 normal transformations in a row can be replaced by a single one. This algorithm gives $s=8d-1$.
\end{proof}


In the following figure we give an illustration of a single iteration for an octahedron. The line segment $l=vv'$ is colored black. The red lines represent the rays from the origin (the red point) on which the vertices lie.\\
	{\bf Perform ${\phi_1}$}: the origin now lies on $l$.\\
	{\bf Perform $\phi_2$}: the vertex $v$ is moved along its ray so that a hyperplane as in the next step exists.\\
	{\bf Perform $\phi_3$}: the hyperplane $H_l$ is orthogonal to $l$, and seperates $v$ and the origin, from the other vertices.\\
	{\bf Perform $\phi_4$}: the resulted octahedron has the property that all diagonals, besides $l$, lie on a hyperplane, which is orthogonal to $l$.

\begin{center}
\begin{minipage}{0.25\linewidth}
	\begin{tikzpicture}%
	[x={(-0.291565cm, 0.113036cm)},
	y={(-0.943073cm, -0.200076cm)},
	z={(0.160011cm, -0.973238cm)},
	scale=1.25,
	back/.style={loosely dotted, thin},
	diags/.style={color=black!95!black, thick},
	edge/.style={color=blue!95!black, thin},
	ray/.style={color=red!95!black, thin},
	facet/.style={fill=blue!95!black,fill opacity=0.2},
	vertex/.style={inner sep=1pt,circle,draw=green!25!black,fill=green!75!black,thick,anchor=base},
	vorigin/.style={inner sep=1pt,circle,draw=red!25!black,fill=red!75!black,thick,anchor=base}]
	%
	%
	\coordinate (a) at (1.00000, 0.00000, 1.00000);
	\coordinate (b) at (-1.00000, 0.00000, 0.50000);
	\coordinate (c) at (0.00000, 1.00000, 0.00000);
	\coordinate (d) at (0.00000, -1.00000, 0.75000);
	\coordinate (e) at (0.00000, 0.00000, 2.00000);
	\coordinate (f) at (0.00000, 0.00000, -1.00000);
	\coordinate (o) at (-0.5, 0.4, -0.1);
	\draw[edge,diags] (0.00000, 0.00000, 2.00000) -- (0.00000, 0.00000, -1.00000);
	\draw[ray] (o) -- (a);
	\draw[ray] (o) -- (b);
	\draw[ray] (o) -- (c);
	\draw[ray] (o) -- (d);
	\draw[ray] (o) -- (e);
	\draw[ray] (o) -- (f);
	\node[vorigin] at (o)     {};
	\draw[edge,back] (-1.00000, 0.00000, 0.50000) -- (0.00000, 1.00000, 0.00000);
	\draw[edge,back] (-1.00000, 0.00000, 0.50000) -- (0.00000, -1.00000, 0.75000);
	\draw[edge,back] (-1.00000, 0.00000, 0.50000) -- (0.00000, 0.00000, 2.00000);
	\draw[edge,back] (-1.00000, 0.00000, 0.50000) -- (0.00000, 0.00000, -1.00000);
	\node[vertex] at (-1.00000, 0.00000, 0.50000)     {};
	\fill[facet] (0.00000, 0.00000, 2.00000) -- (1.00000, 0.00000, 1.00000) -- (0.00000, -1.00000, 0.75000) -- cycle {};
	\fill[facet] (0.00000, 0.00000, -1.00000) -- (1.00000, 0.00000, 1.00000) -- (0.00000, -1.00000, 0.75000) -- cycle {};
	\fill[facet] (0.00000, 0.00000, 2.00000) -- (1.00000, 0.00000, 1.00000) -- (0.00000, 1.00000, 0.00000) -- cycle {};
	\fill[facet] (0.00000, 0.00000, -1.00000) -- (1.00000, 0.00000, 1.00000) -- (0.00000, 1.00000, 0.00000) -- cycle {};
	\draw[edge] (1.00000, 0.00000, 1.00000) -- (0.00000, 1.00000, 0.00000);
	\draw[edge] (1.00000, 0.00000, 1.00000) -- (0.00000, -1.00000, 0.75000);
	\draw[edge] (1.00000, 0.00000, 1.00000) -- (0.00000, 0.00000, 2.00000);
	\draw[edge] (1.00000, 0.00000, 1.00000) -- (0.00000, 0.00000, -1.00000);
	\draw[edge] (0.00000, 1.00000, 0.00000) -- (0.00000, 0.00000, 2.00000);
	\draw[edge] (0.00000, 1.00000, 0.00000) -- (0.00000, 0.00000, -1.00000);
	\draw[edge] (0.00000, -1.00000, 0.75000) -- (0.00000, 0.00000, 2.00000);
	\draw[edge] (0.00000, -1.00000, 0.75000) -- (0.00000, 0.00000, -1.00000);
	\node[vertex] at (1.00000, 0.00000, 1.00000)     {};
	\node[vertex] at (0.00000, 1.00000, 0.00000)     {};
	\node[vertex] at (0.00000, -1.00000, 0.75000)     {};
	\node[vertex,label=below:{$v'$}] at (0.00000, 0.00000, 2.00000)     {};
	\node[vertex,label=above:{$v$}] at (0.00000, 0.00000, -1.00000)     {};
	\end{tikzpicture}
\end{minipage}
\begin{minipage}{0.1\linewidth}
	\begin{tikzpicture}
	\draw[->,thick] (0,0,0) -- (1,0,0) node[draw=none,fill=none,font=\scriptsize,midway,below] {projective} node[draw=none,fill=none,font=\scriptsize,midway,above] {$\phi_1$};
	\end{tikzpicture}
\end{minipage}
\begin{minipage}{0.25\linewidth}
	\begin{tikzpicture}%
		[x={(-0.291565cm, 0.113036cm)},
		y={(-0.943073cm, -0.200076cm)},
		z={(0.160011cm, -0.973238cm)},
		scale=1.25,
		back/.style={loosely dotted, thin},
		diags/.style={color=black!95!black, thick},
		edge/.style={color=blue!95!black, thin},
		ray/.style={color=red!95!black, thin},
		facet/.style={fill=blue!95!black,fill opacity=0.2},
		vertex/.style={inner sep=1pt,circle,draw=green!25!black,fill=green!75!black,thick,anchor=base},
		vorigin/.style={inner sep=1pt,circle,draw=red!25!black,fill=red!75!black,thick,anchor=base}]
		%
		\coordinate (a) at (1.00000, 0.00000, 1.00000);
		\coordinate (b) at (-1.00000, 0.00000, 0.50000);
		\coordinate (c) at (0.00000, 1.00000, 0.00000);
		\coordinate (d) at (0.00000, -1.00000, 0.75000);
		\coordinate (e) at (0.00000, 0.00000, 2.00000);
		\coordinate (f) at (0.00000, 0.00000, -1.00000);
		\coordinate (o) at (0, 0, 0);
		\draw[ray] (o) -- (a);
		\draw[ray] (o) -- (b);
		\draw[ray] (o) -- (c);
		\draw[ray] (o) -- (d);
		\draw[ray] (o) -- (e);
		\draw[ray] (o) -- (f);
		\node[vorigin] at (o)     {};
		\draw[edge,back] (-1.00000, 0.00000, 0.50000) -- (0.00000, 1.00000, 0.00000);
		\draw[edge,back] (-1.00000, 0.00000, 0.50000) -- (0.00000, -1.00000, 0.75000);
		\draw[edge,back] (-1.00000, 0.00000, 0.50000) -- (0.00000, 0.00000, 2.00000);
		\draw[edge,back] (-1.00000, 0.00000, 0.50000) -- (0.00000, 0.00000, -1.00000);
		\node[vertex] at (-1.00000, 0.00000, 0.50000)     {};
		\fill[facet] (0.00000, 0.00000, 2.00000) -- (1.00000, 0.00000, 1.00000) -- (0.00000, -1.00000, 0.75000) -- cycle {};
		\fill[facet] (0.00000, 0.00000, -1.00000) -- (1.00000, 0.00000, 1.00000) -- (0.00000, -1.00000, 0.75000) -- cycle {};
		\fill[facet] (0.00000, 0.00000, 2.00000) -- (1.00000, 0.00000, 1.00000) -- (0.00000, 1.00000, 0.00000) -- cycle {};
		\fill[facet] (0.00000, 0.00000, -1.00000) -- (1.00000, 0.00000, 1.00000) -- (0.00000, 1.00000, 0.00000) -- cycle {};
		\draw[edge] (1.00000, 0.00000, 1.00000) -- (0.00000, 1.00000, 0.00000);
		\draw[edge] (1.00000, 0.00000, 1.00000) -- (0.00000, -1.00000, 0.75000);
		\draw[edge] (1.00000, 0.00000, 1.00000) -- (0.00000, 0.00000, 2.00000);
		\draw[edge] (1.00000, 0.00000, 1.00000) -- (0.00000, 0.00000, -1.00000);
		\draw[edge] (0.00000, 1.00000, 0.00000) -- (0.00000, 0.00000, 2.00000);
		\draw[edge] (0.00000, 1.00000, 0.00000) -- (0.00000, 0.00000, -1.00000);
		\draw[edge] (0.00000, -1.00000, 0.75000) -- (0.00000, 0.00000, 2.00000);
		\draw[edge] (0.00000, -1.00000, 0.75000) -- (0.00000, 0.00000, -1.00000);
		\node[vertex] at (1.00000, 0.00000, 1.00000)     {};
		\node[vertex] at (0.00000, 1.00000, 0.00000)     {};
		\node[vertex] at (0.00000, -1.00000, 0.75000)     {};
		\node[vertex,label=below:{$v'$}] at (0.00000, 0.00000, 2.00000)     {};
		\node[vertex,label=above:{$v$}] at (0.00000, 0.00000, -1.00000)     {};
	\end{tikzpicture}
\end{minipage}
\begin{minipage}{0.1\linewidth}
	\begin{tikzpicture}
	\draw[->,thick] (0,0,0) -- (1,0,0) node[draw=none,fill=none,font=\scriptsize,midway,below] {ray} node[draw=none,fill=none,font=\scriptsize,midway,above] {$\phi_2$};
	\end{tikzpicture}
\end{minipage}
\begin{minipage}{0.25\linewidth}
	\begin{tikzpicture}%
	[x={(-0.291565cm, 0.113036cm)},
	y={(-0.943073cm, -0.200076cm)},
	z={(0.160011cm, -0.973238cm)},
	scale=1.25,
	back/.style={loosely dotted, thin},
	diags/.style={color=black!95!black, thick},
	edge/.style={color=blue!95!black, thin},
	ray/.style={color=red!95!black, thin},
	facet/.style={fill=blue!95!black,fill opacity=0.2},
	vertex/.style={inner sep=1pt,circle,draw=green!25!black,fill=green!75!black,thick,anchor=base},
	vorigin/.style={inner sep=1pt,circle,draw=red!25!black,fill=red!75!black,thick,anchor=base}]
	%
	%
	\coordinate (a) at (1.00000, 0.00000, 1.00000);
	\coordinate (b) at (-1.00000, 0.00000, 0.50000);
	\coordinate (c) at (0.00000, 1.00000, 0.00000);
	\coordinate (d) at (0.00000, -1.00000, 0.75000);
	\coordinate (e) at (0.00000, 0.00000, 2.00000);
	\coordinate (f) at (0.00000, 0.00000, -2.00000);
	\coordinate (o) at (0, 0, 0);
	\draw[ray] (o) -- (a);
	\draw[ray] (o) -- (b);
	\draw[ray] (o) -- (c);
	\draw[ray] (o) -- (d);
	\draw[ray] (o) -- (e);
	\draw[ray] (o) -- (f);
	\node[vorigin] at (o)     {};
	\draw[edge,back] (-1.00000, 0.00000, 0.50000) -- (0.00000, 1.00000, 0.00000);
	\draw[edge,back] (-1.00000, 0.00000, 0.50000) -- (0.00000, -1.00000, 0.75000);
	\draw[edge,back] (-1.00000, 0.00000, 0.50000) -- (0.00000, 0.00000, 2.00000);
	\draw[edge,back] (-1.00000, 0.00000, 0.50000) -- (0.00000, 0.00000, -2.00000);
	\node[vertex] at (-1.00000, 0.00000, 0.50000)     {};
	\fill[facet] (0.00000, 0.00000, 2.00000) -- (1.00000, 0.00000, 1.00000) -- (0.00000, -1.00000, 0.75000) -- cycle {};
	\fill[facet] (0.00000, 0.00000, -2.00000) -- (1.00000, 0.00000, 1.00000) -- (0.00000, -1.00000, 0.75000) -- cycle {};
	\fill[facet] (0.00000, 0.00000, 2.00000) -- (1.00000, 0.00000, 1.00000) -- (0.00000, 1.00000, 0.00000) -- cycle {};
	\fill[facet] (0.00000, 0.00000, -2.00000) -- (1.00000, 0.00000, 1.00000) -- (0.00000, 1.00000, 0.00000) -- cycle {};
	\draw[edge] (1.00000, 0.00000, 1.00000) -- (0.00000, 1.00000, 0.00000);
	\draw[edge] (1.00000, 0.00000, 1.00000) -- (0.00000, -1.00000, 0.75000);
	\draw[edge] (1.00000, 0.00000, 1.00000) -- (0.00000, 0.00000, 2.00000);
	\draw[edge] (1.00000, 0.00000, 1.00000) -- (0.00000, 0.00000, -2.00000);
	\draw[edge] (0.00000, 1.00000, 0.00000) -- (0.00000, 0.00000, 2.00000);
	\draw[edge] (0.00000, 1.00000, 0.00000) -- (0.00000, 0.00000, -2.00000);
	\draw[edge] (0.00000, -1.00000, 0.75000) -- (0.00000, 0.00000, 2.00000);
	\draw[edge] (0.00000, -1.00000, 0.75000) -- (0.00000, 0.00000, -2.00000);
	\node[vertex] at (1.00000, 0.00000, 1.00000)     {};
	\node[vertex] at (0.00000, 1.00000, 0.00000)     {};
	\node[vertex] at (0.00000, -1.00000, 0.75000)     {};
	\node[vertex,label=below:{$v'$}] at (0.00000, 0.00000, 2.00000)     {};
	\node[vertex,label=above:{$v$}] at (0.00000, 0.00000, -2.00000)     {};
	\end{tikzpicture}
\end{minipage}

\begin{minipage}{0.25\linewidth}
	\begin{tikzpicture}%
	[x={(-0.291565cm, 0.113036cm)},
	y={(-0.943073cm, -0.200076cm)},
	z={(0.160011cm, -0.973238cm)},
	scale=1.25,
	back/.style={loosely dotted, thin},
	diags/.style={color=black!95!black, thick},
	edge/.style={color=blue!95!black, thin},
	ray/.style={color=red!95!black, thin},
	facet/.style={fill=blue!95!black,fill opacity=0.2},
	vertex/.style={inner sep=1pt,circle,draw=green!25!black,fill=green!75!black,thick,anchor=base},
	vorigin/.style={inner sep=1pt,circle,draw=red!25!black,fill=red!75!black,thick,anchor=base}]
	\coordinate (a) at (1, -0.365, -0.5);
	\coordinate (b) at (-1, 0.405 , -0.5);
	\coordinate (c) at (0.00000, 0.6, -0.5);
	\coordinate (d) at (0.00000, -0.375, -0.5);
	\coordinate (e) at (0.00000, 0.00000, 2.00000);
	\coordinate (f) at (0.00000, 0.00000, -2.00000);
	\coordinate (o) at (0, 0, -1.25);
	\draw[ray] (o) -- (a);
	\draw[ray] (o) -- (b);
	\draw[ray] (o) -- (c);
	\draw[ray] (o) -- (d);
	\draw[ray] (o) -- (e);
	\draw[ray] (o) -- (f);
	\node[vorigin] at (o)     {};
	\node[vertex] at (1, -0.365, -0.5)     {};
	\node[vertex] at (-1, 0.405 , -0.5)     {};
	\node[vertex] at (0.00000, 0.6, -0.5)     {};
	\node[vertex] at (0.00000, -0.375, -0.5)     {};
	\node[vertex,label=below:{$v'$}] at (0.00000, 0.00000, 2.00000)     {};
	\node[vertex,label=above:{$v$}] at (0.00000, 0.00000, -2.00000)     {};
	\draw[edge,back] (a) -- (c);
	\draw[edge,back] (a) -- (d);
	\draw[edge,back] (a) -- (e);
	\draw[edge,back] (a) -- (f);
	\fill[facet] (e) -- (b) -- (c) -- cycle {};
	\fill[facet] (f) -- (b) -- (c) -- cycle {};
	\fill[facet] (e) -- (b) -- (d) -- cycle {};
	\fill[facet] (f) -- (b) -- (d) -- cycle {};
	\draw[edge] (b) -- (c);
	\draw[edge] (b) -- (d);
	\draw[edge] (b) -- (e);
	\draw[edge] (b) -- (f);
	\draw[edge] (c) -- (e);
	\draw[edge] (c) -- (f);
	\draw[edge] (d) -- (e);
	\draw[edge] (d) -- (f);
	\end{tikzpicture}
\end{minipage}
\begin{minipage}{0.1\linewidth}
	\begin{tikzpicture}
	\draw[->,thick] (1,0,0) -- (0,0,0) node[draw=none,fill=none,font=\scriptsize,midway,below] {ray} node[draw=none,fill=none,font=\scriptsize,midway,above] {$\phi_4$};
	\end{tikzpicture}
\end{minipage}
\begin{minipage}{0.25\linewidth}
	\begin{tikzpicture}%
	[x={(-0.291565cm, 0.113036cm)},
	y={(-0.943073cm, -0.200076cm)},
	z={(0.160011cm, -0.973238cm)},
	scale=1.25,
	back/.style={loosely dotted, thin},
	diags/.style={color=black!95!black, thick},
	edge/.style={color=blue!95!black, thin},
	ray/.style={color=red!95!black, thin},
	facet/.style={fill=blue!95!black,fill opacity=0.2},
	vertex/.style={inner sep=1pt,circle,draw=green!25!black,fill=green!75!black,thick,anchor=base},
	vorigin/.style={inner sep=1pt,circle,draw=red!25!black,fill=red!75!black,thick,anchor=base}]
	\coordinate (a) at (1.00000, 0.00000, 1.00000);
	\coordinate (b) at (-1.00000, 0.00000, 0.50000);
	\coordinate (c) at (0.00000, 1.00000, 0.00000);
	\coordinate (d) at (0.00000, -1.00000, 0.75000);
	\coordinate (e) at (0.00000, 0.00000, 2.00000);
	\coordinate (f) at (0.00000, 0.00000, -2.00000);
	\coordinate (o) at (0, 0, -1.25);
	\draw[ray] (o) -- (a);
	\draw[ray] (o) -- (b);
	\draw[ray] (o) -- (c);
	\draw[ray] (o) -- (d);
	\draw[ray] (o) -- (e);
	\draw[ray] (o) -- (f);
	\node[vorigin] at (o)     {};
	\fill[fill=gray!95!black,fill opacity=0.400000] (1.250000, 1.250000, -0.50000) -- (1.250000, -1.250000, -0.500000) -- (-1.250000, -1.250000, -0.5) -- (-1.250000, 1.250000, -0.5) -- cycle {};
	\node[fill=none] at (0,1.2,-0.6) {$H_l$};
	\node[vertex] at (0.00000, 0.6, -0.5)     {};
	\node[vertex] at (0.00000, -0.375, -0.5)     {};
	\node[vertex] at (1, -0.365, -0.5)     {};
	\node[vertex] at (-1, 0.405 , -0.5)     {};
	\draw[edge,back] (-1.00000, 0.00000, 0.50000) -- (0.00000, 1.00000, 0.00000);
	\draw[edge,back] (-1.00000, 0.00000, 0.50000) -- (0.00000, -1.00000, 0.75000);
	\draw[edge,back] (-1.00000, 0.00000, 0.50000) -- (0.00000, 0.00000, 2.00000);
	\draw[edge,back] (-1.00000, 0.00000, 0.50000) -- (0.00000, 0.00000, -2.00000);
	\node[vertex] at (-1.00000, 0.00000, 0.50000)     {};
	\fill[facet] (0.00000, 0.00000, 2.00000) -- (1.00000, 0.00000, 1.00000) -- (0.00000, -1.00000, 0.75000) -- cycle {};
	\fill[facet] (0.00000, 0.00000, -2.00000) -- (1.00000, 0.00000, 1.00000) -- (0.00000, -1.00000, 0.75000) -- cycle {};
	\fill[facet] (0.00000, 0.00000, 2.00000) -- (1.00000, 0.00000, 1.00000) -- (0.00000, 1.00000, 0.00000) -- cycle {};
	\fill[facet] (0.00000, 0.00000, -2.00000) -- (1.00000, 0.00000, 1.00000) -- (0.00000, 1.00000, 0.00000) -- cycle {};
	\draw[edge] (1.00000, 0.00000, 1.00000) -- (0.00000, 1.00000, 0.00000);
	\draw[edge] (1.00000, 0.00000, 1.00000) -- (0.00000, -1.00000, 0.75000);
	\draw[edge] (1.00000, 0.00000, 1.00000) -- (0.00000, 0.00000, 2.00000);
	\draw[edge] (1.00000, 0.00000, 1.00000) -- (0.00000, 0.00000, -2.00000);
	\draw[edge] (0.00000, 1.00000, 0.00000) -- (0.00000, 0.00000, 2.00000);
	\draw[edge] (0.00000, 1.00000, 0.00000) -- (0.00000, 0.00000, -2.00000);
	\draw[edge] (0.00000, -1.00000, 0.75000) -- (0.00000, 0.00000, 2.00000);
	\draw[edge] (0.00000, -1.00000, 0.75000) -- (0.00000, 0.00000, -2.00000);
	\node[vertex] at (1.00000, 0.00000, 1.00000)     {};
	\node[vertex] at (0.00000, 1.00000, 0.00000)     {};
	\node[vertex] at (0.00000, -1.00000, 0.75000)     {};
	\node[vertex,label=below:{$v'$}] at (0.00000, 0.00000, 2.00000)     {};
	\node[vertex,label=above:{$v$}] at (0.00000, 0.00000, -2.00000)     {};
	\draw[->,thick] (-1,-1.5,-2) -- (-1,-0.8,-1.2)
	node[draw=none,fill=none,font=\scriptsize,midway,right] {projective}
	node[draw=none,fill=none,font=\scriptsize,midway,left] {$\phi_3$};
	\end{tikzpicture}
\end{minipage}
\end{center}

We now deduce Theorem~\ref{thm:eq-cubes}$(b)$, that is, that the realization space $R$ is contractible. Consider the point $p$ in $R$ corresponding to the standard cube. Consider the sequence of $4d+2$ transformations that take the point $x\in R$ to the point $p$: after performing the duals of the first $4d$ transformation in above proof, for the last two transformations, the first is normal that changes from a box to a unit cube, and the second is projective, in fact an isometry to the standard cube (axis parallel, unit, with the origin at its center of mass).
This sequence can be seen as a continuous path in $R$ from $x$ to $p$, where we take the dual of each translation and each ray scaling done linearly in one unit time; likewise for the last two transformations. The resulted map $f:R\times[0,4d+2]\longrightarrow R$ is a homotopy from $R$ to the point $p$: it is indeed continuous by the canonical choices in steps (1--4) for each iteration of the algorithm in the proof of Proposition~\ref{prop:main}. (If $y$ is a cube nearby $x$, then for any vertex in $x$ its unique nearby vertex in $y$ gets the same combinatorial labeling. Given the algorithm for $x$, this determines the algorithm for $y$, and hence the path from $y$ to $p$ in $R$.)
$\ \blacksquare$

\section{A cubical connector $d$-polytope and the $C$-connected sum}\label{sec:cubicalCS}

\begin{definition}
	A {\bf $d$-tower} of $s$ cubes is a cubical stacked $d$-polytope $T$ obtained by stacking on the facet opposite to the facet stacked on in the previous step.
	
	Explicitly, for $s=1$ it is just a $d$-cube. Mark some two opposite facets as bottom and top. For $s>1$, a $d$-tower of $s$ cubes is obtained from a $d$-tower of $s-1$ cubes with bottom facet $F$ and top facet $F'$ by stacking a $d$-cube onto $F'$.	
Then the polytope $T$ has a unique {\bf bottom} facet and a unique {\bf top} facet.
\end{definition}

Given two combinatorial $(d-1)$-cubes $Q_1$ and $Q_2$, we use Proposition~\ref{prop:main} to construct a $d$-tower having bottom facet $Q'_1$ and top facet $Q'_2$, with $Q'_i$ projectively equivalent to $Q_i$, $i=1,2$ . The following lemma shows how to translate each normal transformation from Proposition~\ref{prop:main} into a $d$-tower of $1$ cube.

\begin{lemma}\label{lem:comb_cubes}
	Let $Q_1$ and $Q_2$ be two combinatorial $(d-1)$-cubes which are normally equivalent.
	Then there exists a $d$-cube $Q$ in which $Q_1$ 
	and $Q_2$ (both realized in $\R^d$) are opposite facets.
\end{lemma}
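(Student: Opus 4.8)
The plan is to build $Q$ as the convex hull of the two cubes placed in parallel hyperplanes. Embed $Q_1$ in $\{x_d=0\}\subseteq\R^d$ and $Q_2$ in $\{x_d=1\}$, keeping the orientation so that the facet of $Q_1$ with outer normal $r_i$ and the facet of $Q_2$ with outer normal $r_i$ share the same $r_i\in\R^{d-1}$ (this is what normal equivalence provides), and set $Q:=\conv(Q_1\cup Q_2)$. This is a $d$-polytope: it is the convex hull of finitely many points, it is $d$-dimensional since $Q_1$ spans $\{x_d=0\}$ while $Q_2$ lies off that hyperplane, and $Q_1=Q\cap\{x_d=0\}$, $Q_2=Q\cap\{x_d=1\}$ are its two horizontal facets. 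The first thing I would record is the shape of the horizontal slices: a point at height $t$ lies in $Q$ exactly when it is a convex combination with total weight $t$ on the $Q_2$-part, so the slice of $Q$ at height $t\in[0,1]$ is $Q_t\times\{t\}$ with
$$
Q_t=(1-t)Q_1+tQ_2
$$
the Minkowski combination. Equivalently, writing $Q_j=\set{x\in\R^{d-1}}{\langle r_i,x\rangle\le b_i^{(j)},\ i\in[2d-2]}$, the polytope $Q$ is cut out by $0\le t\le1$ together with the slanted inequalities $\langle r_i,x\rangle+(b_i^{(1)}-b_i^{(2)})t\le b_i^{(1)}$ — exactly the picture of a single cube wedged between two parallel facets.

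Next I would use normal equivalence proper. Since $Q_1$ and $Q_2$ have the same facet outer normals and are both combinatorial $(d-1)$-cubes, they share a normal fan $\mathcal N$ — the normal fan of a $(d-1)$-cube — together with a canonical bijection $F^{(1)}\leftrightarrow F^{(2)}$ between their faces. Because the Minkowski sum of two polytopes with normal fan $\mathcal N$ again has normal fan $\mathcal N$, every slice $Q_t=(1-t)Q_1+tQ_2$, $t\in[0,1]$, is a $(d-1)$-cube with normal fan $\mathcal N$, and its facet with outer normal $r_i$ is $(1-t)G_i^{(1)}+tG_i^{(2)}$, where $G_i^{(j)}$ is the facet of $Q_j$ with normal $r_i$. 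So $Q$ fibers over $[0,1]$ with every fiber a $(d-1)$-cube of one fixed combinatorial type.

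It then remains to identify the whole face lattice of $Q$ with that of the prism $Q_1\times[0,1]$, i.e.\ of the $d$-cube, which I would do with linear functionals on $\R^d$. For $\ell\in\R^{d-1}$ and $c\in\R$, the functional $(x,t)\mapsto\langle\ell,x\rangle+ct$ has maximum $(1-t)h_{Q_1}(\ell)+th_{Q_2}(\ell)+ct$ over the slice at height $t$ (here $h_{Q_j}$ is the support function of $Q_j$), which is affine in $t$; hence if $\ell\neq 0$ its maximum over $Q$ is attained on a face of the top facet $Q_2\times\{1\}$ when $c>h_{Q_1}(\ell)-h_{Q_2}(\ell)$, on a face of the bottom facet $Q_1\times\{0\}$ when $c<h_{Q_1}(\ell)-h_{Q_2}(\ell)$, and, for the one value $c=h_{Q_1}(\ell)-h_{Q_2}(\ell)$, exactly on $\conv(F^{(1)}_\ell\cup F^{(2)}_\ell)$, where $F^{(j)}_\ell$ is the face of $Q_j$ maximizing $\ell$. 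As $\ell$ ranges over the relative interior of a cone $\sigma\in\mathcal N$, the pair $(F^{(1)}_\ell,F^{(2)}_\ell)$ is a single pair of corresponding faces $(F^{(1)}_\sigma,F^{(2)}_\sigma)$. Thus the proper faces of $Q$ are exactly the faces of $Q_1$, the faces of $Q_2$, and the vertical faces $\conv(F^{(1)}_\sigma\cup F^{(2)}_\sigma)$ — precisely the three families of proper faces of $Q_1\times[0,1]$. Hence $Q$ is combinatorially a $d$-cube, and $Q_1=Q\cap\{x_d=0\}$, $Q_2=Q\cap\{x_d=1\}$ are facets sharing no vertex yet together containing all $2^d$ vertices of $Q$, that is, a pair of opposite facets, as required.

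The step I expect to be the real work is the last one: checking that $\conv(Q_1\cup Q_2)$ carries \emph{exactly} the prism face lattice, with no spurious faces and, in particular, no side facet $\conv(G_i^{(1)}\cup G_i^{(2)})$ collapsing to lower dimension. All of this is governed by $Q_1$ and $Q_2$ having a genuinely \emph{common} normal fan, which is what makes the bijection $F^{(1)}\leftrightarrow F^{(2)}$ well defined and dimension- and incidence-preserving; granting that, the functional bookkeeping above is routine. If one prefers to avoid normal-fan language, an alternative is induction on $d$: the top and bottom facets of $Q$ are $(d-1)$-cubes by hypothesis, each side facet $\conv(G_i^{(1)}\cup G_i^{(2)})$ is a $(d-1)$-cube by the present lemma applied one dimension down to the normally equivalent $(d-2)$-cubes $G_i^{(1)}$ and $G_i^{(2)}$, and one then matches the incidences of these $2d$ facets against those of the prism over $Q_1$.
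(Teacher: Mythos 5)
Your construction is exactly the paper's: place $Q_1$ in $\{x_d=0\}$ and $Q_2$ in $\{x_d=1\}$, take the convex hull, and observe that $Q$ is cut out by $0\le x_d\le 1$ together with the slanted inequalities with left-hand column $b_1-b_2$. The paper stops at that $H$-description, whereas you additionally verify the prism face lattice via the common normal fan and support functions; this extra bookkeeping is correct and only makes explicit what the paper leaves implicit.
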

\begin{proof}
	
	Assume that both $Q_1$ and $Q_2$ are realized in $\R^d$ on the last coordinate $=0$ hyperplane. Lift the vertices of $Q_2$ (say to height $1$), and take the convex hull, denote it by $Q$.
	
	Here is an explicit description of $Q$:	
	Let $A_1,A_2\in \R^{(2d-2)\times (d-1)}$ and $b_1,b_2\in\R^{2d-2}$ be such that
	\begin{equation*}
	Q_1=\set{x\in\R^{d-1}}{A_1x\leq b_1},\qquad
	Q_2=\set{x\in\R^{d-1}}{A_2x\leq b_2}.
	\end{equation*}
	The fact that $Q_1$ and $Q_2$ are normally equivalent means that $A_1=A_2$.
	We define
	\begin{equation}
	A=\left(
	\begin{array}{ccc|c}
	& & & \mid\\
	& A_1 & & b_1-b_2\\
	& & & \mid\\
	\hline
	0 & \cdots & 0 & 1\\
	0 & \cdots & 0 & -1
	\end{array}\right),\qquad
	b=\left(
	\begin{array}{c}
	\mid\\
	b_1\\
	\mid\\
	\hline
	1\\
	0
	\end{array}\right),
	\end{equation}
	and
	$$
	Q=\set{x\in\R^d}{Ax\leq b}.\qedhere
	$$
\end{proof}

Applying Lemma~\ref{lem:comb_cubes} to each of the normal transformations in Proposition~\ref{prop:main}, and Lemma~\ref{lem:connected_sum}
to glue each such new $d$-cube to the previously constructed polytope so that the result is again a convex polytope,
we conclude Theorem~\ref{thm:cubical.conn.sum}
(here $s$ is as in Proposition~\ref{prop:main}):
\begin{corollary}
	Let $Q$ and $Q'$ be two combinatorial $(d-1)$-cubes. Then there is a $d$-tower of
$4d$ cubes with bottom facet projectively equivalent to $Q$ and top facet projectively equivalent to $Q'$. We call this tower a {\bf cubical connector} and denote it $C(Q,Q')$.
\end{corollary}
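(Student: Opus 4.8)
The plan is to run the transformation sequence of Proposition~\ref{prop:main} on the $(d-1)$-cubes $Q$ and $Q'$, trade each normal transformation for a single $d$-cube via Lemma~\ref{lem:comb_cubes}, and stack these cubes on top of one another, letting Lemma~\ref{lem:connected_sum} absorb the projective transformations at no cost.

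First, apply Proposition~\ref{prop:main} in dimension $d-1$: there is a sequence $\phi_1,\dots,\phi_s$ of projective and normal transformations of $\R^{d-1}$ with $Q'=(\phi_s\circ\cdots\circ\phi_1)(Q)$. Since the composition of two normal transformations is again normal and the composition of two projective transformations is projective, we may assume the two types alternate along the sequence; inspecting the construction in the proof of Proposition~\ref{prop:main} — the ray transformations in steps (2) and (4) of each of the $d-1$ iterations dualize to exactly the normal transformations on the primal side — the number of normal transformations among the $\phi_i$ is at most $4d$.

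Now build the tower inductively. Carry along a current combinatorial $(d-1)$-cube $Q_t$ (with $Q_0:=Q$ and $Q_t:=\phi_t(Q_{t-1})$) and a cubical $d$-tower $T_t$ whose top facet is projectively equivalent to $Q_t$. If $\phi_t$ is projective we leave the tower unchanged, $T_t:=T_{t-1}$: the only effect is that $Q_t$ is merely projectively equivalent to $Q_{t-1}$, which is all we will use. If $\phi_t$ is normal, then $Q_{t-1}$ and $Q_t$ are normally equivalent $(d-1)$-cubes, so realizing $Q_{t-1}$ in the hyperplane $\{x_d=0\}$ of $\R^d$ and $Q_t$ lifted to $\{x_d=1\}$, Lemma~\ref{lem:comb_cubes} produces a $d$-cube $K_t$ with $Q_{t-1}$ as bottom facet and $Q_t$ as top facet. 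The bottom facet of $K_t$ is projectively equivalent to the top facet of $T_{t-1}$ (both being projectively equivalent to $Q_{t-1}$), so by Lemma~\ref{lem:connected_sum} there is a projective transformation $\varphi$ with $T_{t-1}\cup\varphi(K_t)$ convex and meeting in that common facet; set $T_t:=T_{t-1}\cup\varphi(K_t)$, which is again a $d$-tower (one more cube stacked on the top facet) whose top facet is projectively equivalent to $\varphi(Q_t)$, hence to $Q_t$. After processing the whole sequence, the resulting tower $T$ has bottom facet projectively equivalent to $Q$ and top facet projectively equivalent to $Q_s=Q'$, and the number of its cubes equals the number of normal transformations used, hence at most $4d$; if it is strictly smaller we stack prisms over the current top facet (Lemma~\ref{lem:comb_cubes} applied with equal bottom and top facet, giving a combinatorial $d$-cube) until exactly $4d$ cubes are used. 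This tower $T$ is the desired cubical connector $C(Q,Q')$.

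The main obstacle is the coordinate bookkeeping: each application of Lemma~\ref{lem:comb_cubes} wants its two facets in fixed parallel hyperplanes, while the maps $\phi_t$ live in $\R^{d-1}$ and the growing tower in $\R^d$, so one must check that the projective transformations furnished by Lemma~\ref{lem:connected_sum} can be chosen consistently so that the union remains convex and combinatorially a tower at every step — in particular that stacking a new cube onto the current top facet does not disturb the faces already built, which is precisely the face description recorded just before Lemma~\ref{lem:connected_sum}. The quantitative bound of $4d$ cubes is the only numerical input and is inherited directly from Proposition~\ref{prop:main}.
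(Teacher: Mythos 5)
Your proposal is correct and follows essentially the same route as the paper: run Proposition~\ref{prop:main} on the $(d-1)$-cubes, convert each normal transformation into a $d$-cube via Lemma~\ref{lem:comb_cubes}, and glue successively with Lemma~\ref{lem:connected_sum}, which absorbs the projective steps for free. The only additions beyond the paper's (very terse) argument are the explicit induction invariant and the padding by prisms to reach exactly $4d$ cubes, both of which are fine.
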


\begin{definition}
	Let $Q_1$ and $Q_2$ be cubical $d$-polytopes. Let $F_1$ be a facet of $Q_1$, $F_2$ a facet of $Q_2$, and $C=C(F_1,F_2)$ the appropriate cubical connector (a tower of $4d$ cubes). The {\bf $\mathbf C$-connected sum} $Q=Q_1\# Q_2$ is the cubical $d$-polytope obtained by taking the connected sum $Q_1\#_{F_1}C\#_{F_2}Q_2$.
\end{definition}


\section{Filling the $\gc$-cone
}\label{sec:Adin.cone.density}

We apply the connected sum construction to appropriate AKN-polytopes (see~\cite{AdinKN18}) thus obtaining sequences of cubical $d$-polytopes with corresponding $\gc$-vector sequences approaching any ray in the nonnegative orthant of $\R^{\dhalf{d}}$.

\begin{lemma}\label{lem:gc_of_cs}
	Let $Q_1\# Q_2$ be a $C$-connected sum then
	\begin{equation}\label{eq:gc_of_cs}
	\begin{aligned}
	\gc_1(Q_1\# Q_2)&=\gc_1(Q_1) + \gc_1(Q_2) + (4d+1)2^{d-1},\\
	\gc_i(Q_1\# Q_2)&=\gc_i(Q_1) + \gc_i(Q_2)\qquad (2\leq i\leq \dhalf{d}).
	\end{aligned}
	\end{equation}
\end{lemma}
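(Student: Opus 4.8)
The plan is to track the $f$-vector through each of the two elementary operations that build the $C$-connected sum — stacking a single $d$-cube (as in a $d$-tower) and gluing two cubical $d$-polytopes along a common facet — and then translate the resulting $f$-vector change into the $\hc$- and $\gc$-vectors via the defining generating-function identities in Section~\ref{sec:prelim}. Since $\gc$ is a linear function of the $f$-vector (it is obtained from $f$ by the linear substitutions defining $\hsc$, $\hc$, and finally the differences $\hc_i-\hc_{i-1}$), it suffices to compute the $f$-vector increment for each elementary move and apply linearity.

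First I would treat stacking. Stacking a $d$-cube $\square^d$ onto a facet $F\cong\square^{d-1}$ of a cubical $d$-polytope $P$ replaces $F$ by the remaining proper faces of $\square^d$; concretely, one adds $f_j(\square^d)-f_j(\square^{d-1})$ new faces of each dimension $j$, except that in top dimension $j=d-1$ we delete $F$ and add the $2d-1$ remaining facets of $\square^d$ (net $+2d-2$), and there is a similar bookkeeping in dimension $0$ (the $2^{d-1}$ new vertices). The key point, which I would verify by a direct computation with the generating function $\hsc(Q,t)=(1-t)^{d-1}f(Q,\tfrac{2t}{1-t})$, is that stacking a cube contributes exactly $\hsc(\square^d,t)-\hsc(\square^{d-1},t)$-worth of change, and after passing to $\hc$ and then to $\gc$ this amounts to $\gc_1\mathrel{+}= 2^{d-1}$ and $\gc_i$ unchanged for $i\ge 2$ — because the cube $\square^d$ itself has $\gc(\square^d)=(2^{d-1},0,\dots,0)$ and the increment is "stationary" in the sense that it does not depend on which polytope we stack on. I would do this computation once symbolically, using $\hc_0=\gc_0=2^{d-1}$ and the symmetry $\hc_i=\hc_{d-i}$.

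Next I would treat the gluing step. If $R=P\#_F Q$ is a connected sum of cubical $d$-polytopes along a facet $F\cong\square^{d-1}$, then $\mathrm{faces}(R)=(\mathrm{faces}(P)\cup\mathrm{faces}(Q))\setminus\{F\}$ by the identity recalled in Section~\ref{sec:prelim}, but one must account for the identification of the boundary complexes of $F$ inside $P$ and inside $Q$ (those faces are shared and should be counted once), so $f_j(R)=f_j(P)+f_j(Q)-f_j(\partial\square^{d-1})-[j=d-1]$ with the convention $f_{d-1}(\partial\square^{d-1})=1$ accounting for $F$ itself. Feeding this into the linear map $f\mapsto\gc$, and using that the boundary of $\square^{d-1}$ contributes a fixed correction independent of $P,Q$, the connected sum should give $\gc_i(R)=\gc_i(P)+\gc_i(Q)-\gc_i(\square^d)+c_i$ for explicit constants; I expect the net effect after the dust settles to be $\gc_1(P\#_F Q)=\gc_1(P)+\gc_1(Q)+2^{d-1}$ and $\gc_i(P\#_F Q)=\gc_i(P)+\gc_i(Q)$ for $i\ge 2$, consistent with the fact that $\gc(\square^d\#_F\square^d)$ should have first coordinate $2\cdot 2^{d-1}+2^{d-1}$ minus/plus the shared-facet correction.

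Finally I would assemble the $C$-connected sum $Q_1\# Q_2 = Q_1\#_{F_1}C\#_{F_2}Q_2$, where $C$ is a tower of $4d$ cubes, hence built from one cube by $4d-1$ stackings, and then glued to $Q_1$ and to $Q_2$. Summing the increments: the tower $C$ alone has $\gc_1(C)=2^{d-1}+(4d-1)2^{d-1}=4d\cdot 2^{d-1}$ and $\gc_i(C)=0$ for $i\ge2$; the two gluings each add $2^{d-1}$ to $\gc_1$ and leave $\gc_i$ ($i\ge 2$) additive; total $\gc_1$-increment over $\gc_1(Q_1)+\gc_1(Q_2)$ is $4d\cdot 2^{d-1}+2\cdot 2^{d-1}=(4d+2)2^{d-1}$ — and I would reconcile this with the stated $(4d+1)2^{d-1}$ by being careful about whether the tower's own two end facets, which get absorbed into the gluings, were already counted (one of the two gluings with $C$ does not create a genuinely "new" stacked cube beyond the $4d$ already in $C$, shaving off one $2^{d-1}$). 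The main obstacle is precisely this careful double-entry bookkeeping at the shared facets — making sure each face of the final complex is counted exactly once and that the constants coming from $\partial\square^{d-1}$ and from the substitution $t\mapsto\tfrac{2t}{1-t}$ are handled consistently — rather than any conceptual difficulty; the linearity of $\gc$ in $f$ does all the structural work, and the cubical Dehn–Sommerville symmetry guarantees the higher $\gc_i$ behave additively once the facet-identification terms cancel.
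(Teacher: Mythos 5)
Your overall strategy --- exploit the affine dependence of $\gc$ on the $f$-vector, reduce everything to the $f$-vector increment of a single facet-gluing, and use the stackedness of the tower to control $\gc_i$ for $i\ge 2$ --- is exactly the route the paper takes. But your $\gc_1$ bookkeeping contains a concrete error, and the ``reconciliation'' you propose to absorb the resulting discrepancy is not the right fix. You take the single cube to contribute $2^{d-1}$ to $\gc_1(C)$, apparently reading this off the vector $\gc(\square^d)=(2^{d-1},0,\dots,0)$; but that leading entry is $\gc_0(\square^d)=\hc_0=2^{d-1}$, whereas $\gc_1(\square^d)=f_0(\square^d)-2^d=0$. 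Hence a tower of $4d$ cubes has $\gc_1(C)=(4d-1)2^{d-1}$, not $4d\cdot 2^{d-1}$ (it has $(4d+1)2^{d-1}$ vertices, and $\gc_1=f_0-2^d$). Your other two ingredients are correct: each of the two gluings onto $C$ contributes $+2^{d-1}$ to $\gc_1$, since the shared facet identifies $2^{d-1}$ vertices while the normalization $-2^d$ is counted once instead of twice. So the correct total is $(4d-1)2^{d-1}+2\cdot 2^{d-1}=(4d+1)2^{d-1}$, exactly the stated constant, and there is nothing left to reconcile. In particular your suggested patch --- that one of the two gluings with $C$ is ``not genuinely new'' and shaves off $2^{d-1}$ --- is wrong: the two gluings are symmetric and both count. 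The cleanest treatment of this coordinate, and the one the paper uses, bypasses $\hsc$ entirely: just count vertices via $\gc_1=f_0-2^d$.

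The second issue is that the statement you leave at ``I expect the net effect after the dust settles'' --- namely $\gc_i(P\#_F Q)=\gc_i(P)+\gc_i(Q)$ for $2\le i\le \dhalf{d}$ --- is the actual content of the lemma in those coordinates and needs a verification, because $\gc$ is only an \emph{affine} function of $f$ and the connected sum subtracts a nonzero constant polynomial. The correction term in the short cubical $h$-polynomial coming from the shared facet is $(t-1)\bigl(\hsc(\square^{d-1},t)+2^{d-1}t^{d-1}\bigr)$, whose $t^i$-coefficient for $1\le i\le d-2$ is $\hsc_{i-1}(\square^{d-1})-\hsc_i(\square^{d-1})$; this vanishes precisely because the short cubical $h$-vector of a cube is constant. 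Without that one identity, ``the facet-identification terms cancel'' is an assertion rather than a proof. Once it is supplied, your reduction ($C$ is cubical stacked, so $\gc_i(C)=0$ for $i\ge 2$, and the two gluings are additive in those coordinates) finishes the argument as in the paper.
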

\begin{proof}
	Let us first observe that for the (usual) connected sum $Q\#_F Q'$, when $Q$ and $Q'$ are cubical $d$-polytopes we have
	\begin{equation*}
	f(Q\#_F Q',t)=f(Q,t)+f(Q',t)-f(\square^{d-1},t)-t^{d-1},
	\end{equation*}
	and
	\begin{equation*}
	\hsc(Q\#_F Q',t)=(1-t)^df\left(Q\#_F Q',\frac{2t}{1-t}\right).
	\end{equation*}
	
	It follows that
	\begin{equation*}
	\hsc(Q\#_F Q',t)=\hsc(Q,t)+\hsc(Q',t)+(t-1)\left(\hsc(\square^{d-1},t)+2^{d-1}t^{d-1}\right),
	\end{equation*}
	and so, for $1\leq i\leq d-2$, we have
	\begin{align*}
	\hsc_i(Q\#_F Q')&=\hsc_i(Q)+\hsc_i(Q')+\hsc_{i-1}(\square^{d-1})-\hsc_i(\square^{d-1})\\
	&=\hsc_i(Q)+\hsc_i(Q').
	\end{align*}
	
	It immediately follows that
	\begin{align*}
	\hc_i(Q\#_F Q')&=\hc_i(Q)+\hc_i(Q')\qquad (1\leq i\leq d-1),\\
	\gc_i(Q\#_F Q')&=\gc_i(Q)+\gc_i(Q')\qquad (2\leq i\leq \dhalf{d}).
	\end{align*}
	
	We can now analyze the $C$-connected sum $Q_1\# Q_2$. Since
	\begin{equation*}
	Q_1\# Q_2=Q_1\#_{F_1} C\#_{F_2} Q_2,
	\end{equation*}
	we have
	\begin{equation*}
	\gc_i(Q_1\# Q_2)=\gc_i(Q_1) + \gc_i(C) + \gc_i(Q_2)\qquad (2\leq i\leq \dhalf{d}),
	\end{equation*}
	and since $C$ is cubical stacked we have 
$\gc_i(C)=0$ for $2\leq i\leq \dhalf{d}$, so we obtain~\eqref{eq:gc_of_cs} as required for $2\leq i\leq \dhalf{d}$. For $i=1$ one computes directly, using $\gc_1=f_0-2^d$.
\end{proof}

The following proves Theorem~\ref{thm:dense-f}:
\begin{proposition}
	Let $r$ be any ray in the nonnegative orthant in $\mathbb{R}^{\dhalf{d}}$. Then there exists a sequence $\{Q_n\}_{n=1}^{\infty}$ of cubical $d$-polytopes with the sequence $\{\gc(Q_n)\}_{n=1}^{\infty}$ approaching $r$.
\end{proposition}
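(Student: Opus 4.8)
The plan is to build each $Q_n$ as an iterated $C$-connected sum of the cubical polytopes of~\cite{AdinKN18} (the ``AKN polytopes''), superposing one family of blocks for each coordinate direction and tuning the number of copies so that the resulting $\gc$-vector points in a direction converging to $r$. Write $m=\dhalf{d}$ and $r=(r_1,\dots,r_m)$ with $r_i\ge 0$, $r\ne 0$; here and below $\gc(Q)$ abbreviates $(\gc_1(Q),\dots,\gc_m(Q))$, as $\gc_0\equiv 2^{d-1}$ is irrelevant. The only input from~\cite{AdinKN18} is that for each $1\le i\le m$ there is a sequence of cubical $d$-polytopes $P_i^{(1)},P_i^{(2)},\dots$ with $\gc(P_i^{(k)})/\|\gc(P_i^{(k)})\|\to e_i$ and $\gc_i(P_i^{(k)})\to\infty$; for $i=1$ one may just take towers of cubes, for which $\gc_j\equiv 0$ $(j\ge 2)$. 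The engine is Lemma~\ref{lem:gc_of_cs} applied $N-1$ times: if $Q$ is obtained from cubical $d$-polytopes $R_1,\dots,R_N$ by $N-1$ successive $C$-connected sums (along arbitrary facets), then $\gc_j(Q)=\sum_{l}\gc_j(R_l)$ for $2\le j\le m$, while $\gc_1(Q)=\sum_l\gc_1(R_l)+(N-1)(4d+1)2^{d-1}$, the correction accruing once per $C$-connected sum (the intermediate connectors are cubical stacked, hence contribute nothing to $\gc_j$, $j\ge 2$).

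For the construction, fix $n$. For each $i$ with $r_i>0$ I would choose $P_i^{[n]}:=P_i^{(k)}$ with $k$ so large that
$$v_{i,n}:=\gc_i(P_i^{[n]})\ge n\qquad\text{and}\qquad |\gc_j(P_i^{[n]})|\le \tfrac1n\,v_{i,n}\ \text{ for every }j\ne i,$$
which is possible by the displayed convergence. Then set $M_n:=n\cdot\max_{i:\,r_i>0}v_{i,n}$ (so $M_n/v_{i,n}\ge n$), put $c_{i,n}:=\lceil M_n r_i/v_{i,n}\rceil$ when $r_i>0$ and $c_{i,n}:=0$ when $r_i=0$, and let $Q_n$ be a $C$-connected sum of $c_{i,n}$ copies of $P_i^{[n]}$ over all $i$. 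Since $N_n:=\sum_i c_{i,n}\ge 1$ and $C$-connected sums of cubical polytopes are cubical, $Q_n$ is a genuine cubical $d$-polytope.

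It remains to check $\gc_j(Q_n)/M_n\to r_j$ for each $j$; since $M_n\ge n^2\to\infty$, this is exactly the statement that the ray through $\gc(Q_n)$ converges to $r$. Two estimates suffice: $c_{i,n}v_{i,n}\in[M_n r_i,\,M_n r_i+v_{i,n}]$ with $v_{i,n}\le M_n/n$, and $N_n\le\sum_i(M_n r_i/v_{i,n}+1)\le M_n\sum_i r_i/n+m=o(M_n)$. For $2\le j\le m$ the diagonal block gives $c_{j,n}v_{j,n}=M_n r_j+O(v_{j,n})=M_n r_j+o(M_n)$, while the off-diagonal blocks contribute at most $\tfrac1n\sum_i c_{i,n}v_{i,n}\le\tfrac1n\sum_i(M_n r_i+v_{i,n})=o(M_n)$ in absolute value; hence $\gc_j(Q_n)=M_n r_j+o(M_n)$. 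For $j=1$ the same bounds, together with $(N_n-1)(4d+1)2^{d-1}\le N_n(4d+1)2^{d-1}=o(M_n)$ and $c_{1,n}v_{1,n}=M_n r_1+o(M_n)$ (which is $o(M_n)$ when $r_1=0$, since then $c_{1,n}=0$), give $\gc_1(Q_n)=M_n r_1+o(M_n)$.

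The point that needs care is the competition between two requirements on the multiplicities: the $c_{i,n}$ must be \emph{large} so that the rounding error $v_{i,n}/(M_n r_i)$ in $c_{i,n}v_{i,n}\approx M_n r_i$ tends to $0$, yet $N_n$ must stay \emph{small compared to $M_n$} so that the unavoidable connector term $(N_n-1)(4d+1)2^{d-1}$ does not pollute $\gc_1$ — which matters precisely when $r_1=0$. Pushing the AKN blocks far out in their sequences (which also drives their off-diagonal $\gc$-components to $0$), and weighting by $M_n=n\max_i v_{i,n}$, makes all the requirements compatible. The remaining verifications are routine given the earlier sections: that Lemma~\ref{lem:gc_of_cs} indeed iterates as stated over many $C$-connected sums formed along arbitrary facets, and that such $C$-connected sums can always be formed (the relevant facets being $(d-1)$-cubes).
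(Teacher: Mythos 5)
Your proof is correct, but it takes a genuinely different route from the paper's. The paper works one ratio at a time, from the top coordinate downwards: it pairs the AKN sequences $Q(\dhalf{d},d,m)$ and $Q(\dhalf{d}-1,d,l)$ and chooses $l=\lceil\log c+m+\log m\rceil$ as an explicit function of $m$ so that the known growth rates $2^mm^{i-1}$ produce the target ratio $s_{\dhalf{d}-1}/s_{\dhalf{d}}$ in the limit, then repeats for the next coordinate, relying on the fact that the AKN polytopes have vanishing $\gc$-entries beyond the dominating coordinate so that earlier ratios are not disturbed; the $(4d+1)2^{d-1}$ anomaly in $\gc_1$ is absorbed by perturbing the target constant $c$ in the last step. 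You instead realize the target direction in one shot as an approximate nonnegative integer combination: for each $n$ you take $c_{i,n}$ copies of a far-out block for each coordinate and control the error by three counting estimates (rounding error $v_{i,n}\le M_n/n$, off-diagonal leakage $\le\tfrac1n\sum_i c_{i,n}v_{i,n}$, and connector correction $(N_n-1)(4d+1)2^{d-1}=o(M_n)$ because $N_n=o(M_n)$). Your version needs only the qualitative AKN facts (normalized $\gc$-vectors tend to $e_i$ and $\gc_i\to\infty$) rather than the precise asymptotics, treats all coordinates symmetrically instead of by backward induction, and disposes of the $\gc_1$ correction by an estimate rather than a constant adjustment — at the cost of using many more summands per polytope and of making explicit the (routine, but worth stating) iteration of Lemma~\ref{lem:gc_of_cs} over $N-1$ successive $C$-connected sums. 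Both arguments are complete; yours is arguably the more robust of the two.
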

\begin{proof}
	To construct the sequence $Q_n$ approaching $r$, the ray spanned by $(s_1,\dots ,s_{\dhalf{d}})$, we start by constructing a sequence having the correct ratio between the $\dhalf{d}$-th coordinate and the $(\dhalf{d}-1)$-th coordinate:	
	Take
	$$
	Q_m=Q(\dhalf{d},d,m),\quad m\to\infty\qquad\text{and}\qquad Q'_l=Q(\dhalf{d}-1,d,l),\quad l\to\infty,
	$$
	and recall from~\cite{AdinKN18} that
	\begin{align*}
	\gc_{\dhalf{d}}(Q_m)=2^mm^{\dhalf{d}-1}+o(2^mm^{\dhalf{d}-1}),&\qquad \gc_{\dhalf{d}-1}(Q_m)=o(2^mm^{\dhalf{d}-1})\\
	\gc_{\dhalf{d}}(Q'_l)=0,&\qquad
	\gc_{\dhalf{d}-1}(Q'_l)=2^ll^{\dhalf{d}-2}+o(2^ll^{\dhalf{d}-2}).
	\end{align*}
	
	Let $c=\frac{s_{\dhalf{d}-1}}{s_{\dhalf{d}}}$. For each $m\geq d$, let $l=\lceil \log c + m+\log m\rceil$, and take the corresponding subsequence of $Q'_l$'s (we abuse notation and denote it again by $Q'_l$). We have
	\begin{align*}
	\gc_{\dhalf{d}-1}(Q'_l)&=2^ll^{\dhalf{d}-2}+o(2^ll^{\dhalf{d}-2})\\
	&=2^{\lceil \log c + m+\log m\rceil}(\lceil m+\log m+\log c\rceil)^{\dhalf{d}-2}  + o(2^ll^{\dhalf{d}-2})\\
	&=c2^mm^{\dhalf{d}-1}+o(2^mm^{\dhalf{d}-1})
	\end{align*}
	and so taking
	$$Q_n=Q_m\#Q'_l,\quad n\to\infty,$$ using Lemma~\ref{lem:gc_of_cs} we obtain
	\begin{equation*}
	\lim_{n \to \infty} \frac{\gc_{\dhalf{d}-1}(Q_n)}{\gc_{\dhalf{d}}(Q_n)} = \frac{s_{\dhalf{d}-1}}{s_{\dhalf{d}}}.
	\end{equation*}
	
	Do the same with the new sequence and an AKN-sequence approaching the $(\dhalf{d}-2)$-th ray, etc. Note that proceeding in this way (from the last coordinate backwards) does not influence the ratios already taken care of, because the $\gc$-entries are $0$ after the dominating coordinate in the AKN construction.
	
	Since the formula for $\gc_1$ in Lemma~\ref{lem:gc_of_cs} is different we use $c=\frac{s_1}{s_2}+2^{d-m}\left(1-\frac{1}{m}\right)$ in the last step.
\end{proof}

\section{Concluding remarks: Generalizations and open questions}\label{sec:concluding}

Let us start off by remarking that the bound on the number of iterated projections and normal transformations may not be optimal, and the reason for this may lie in the fact that we are not allowing the full
action by projective transformations and
normal transformations, as we want to stay in the world of polytopes. Indeed, purely from a naive dimension count for the realization space of the $d$-cube $(2d^2)$ compared to the projective linear group $(d(d+2))$ it might be possible that only a constant number of these operations suffice (namely $3$, projective followed by normal followed by projective). We leave this as an open problem.

\begin{problem}
	How many normal and projective transformations are needed to transform any combinatorial cube into the standard one?
\end{problem}

Second is the natural question of more classes of combinatorial types of polytopes that are connected by normal and projective transformations. Let us call those polytopes $\pn$-unique. Dually, let us call polytopes weakly-$\pr$-unique if they are related by projective transformation, and a movement of its vertices along the rays they generate within the same combinatorial type.
But in the dual, this permits moving some facet hyperplanes to infinity.
If we want the dual to $\pn$-uniqueness, then we add the condition that the origin has to be in the interior of the polytope at all times; we call such polytopes $\pr$-unique. Then the $\pr$-unique polytopes are precisely dual to the $\pn$-unique polytopes.

We note the following simple fact about
simplicial
stacking (connected sum with a
simplex $S$) on $\pr$-unique polytopes:
\begin{proposition}
	If $P$ is $\pr$-unique and $F$ a simplex facet of $P$, then $P\#_F S$ is $\pr$-unique.
\end{proposition}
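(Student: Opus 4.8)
The plan is to argue on the dual side, where $\pr$-uniqueness becomes exactly the statement that any two realizations (with the origin in the interior) are connected by projective transformations together with ray movements preserving the combinatorial type. So let $P$ be $\pr$-unique, $F$ a simplex facet, and $P' = P\#_F S$ where $S$ is a simplex stacked on $F$. I want to show: given any two realizations $R_1, R_2$ of the combinatorial type of $P'$, each with the origin in the interior, there is a finite sequence of projective transformations and ray movements (staying inside the combinatorial type, origin always interior) taking $R_1$ to $R_2$.

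First I would isolate the stacked vertex. Combinatorially, $P'$ is $P$ with one new vertex $v$ beyond the facet $F$, and the vertex figure at $v$, as well as the faces using $v$, form the simplex $S$; all faces not containing $v$ are faces of $P$ (other than $F$ itself). So in any realization $R$ of $P'$, deleting $v$ and taking the convex hull of the remaining vertices gives a realization $\bar R$ of $P$ (the facet $F$ reappears), and $R$ is recovered from $\bar R$ by choosing a point $v$ beyond $F$ and only $F$. The key geometric fact I will use is that a simplex is projectively unique, hence also affinely flexible enough that, with the combinatorial type and the hyperplane of $F$ fixed, the position of the apex $v$ in the cone beyond $F$ can be moved to any other admissible position by a projective transformation that fixes $\bar R$ setwise — more precisely, a projective transformation supported on the halfspace beyond $F$ that restricts to the identity on the hyperplane of $F$ and hence on $\bar R$. (Such transformations are the ''perspectivities from $F$'' and they let us normalize $v$.) I would also need to be careful that these transformations keep the origin in the interior of $P'$; since the origin lies in the interior of $\bar R \subset P'$, which is unaffected, this is automatic.

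The main steps, in order: (1) Given $R_1$, apply a projective transformation fixing the hyperplane of $F$ to move its apex $v_1$ to a standard position relative to $F$ — e.g. so that $\mathrm{conv}(F \cup v_1)$ is a standard simplex cone over $F$; do the same for $R_2$, reducing to the case where the apices and the local picture near $v$ agree. (2) Now the two realizations differ only in the ''$P$-part'': the induced realizations $\bar R_1$ and $\bar R_2$ of $P$. Apply $\pr$-uniqueness of $P$ to connect $\bar R_1$ to $\bar R_2$ by projective transformations and ray movements. (3) Transport each such transformation to a transformation of the ambient $P'$: a ray movement of a vertex of $P$ other than those of $F$ is literally a ray movement of the corresponding vertex of $P'$ (it does not touch $v$); a ray movement of a vertex of $F$ drags $F$, so I must simultaneously adjust $v$ — but since $v$ can be placed freely beyond the (now moved) facet $F$, I can move $v$ along its own ray, or interleave a perspectivity, to keep the combinatorial type; a projective transformation of $\bar R$ is applied to all of $P'$ at once and then corrected near $v$ as in step (1). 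Finally I would check the origin stays interior throughout and that $v$ never crosses the hyperplane of $F$ or the hyperplane of another facet of $P$ adjacent to $F$, using that there is an open cone of valid positions for $v$ and that the whole path of $\bar R$'s is compact so a uniform choice works.

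The main obstacle I anticipate is step (3) in the case of a ray movement (or projective transformation) that moves the vertices of the facet $F$ itself: this perturbs the hyperplane spanned by $F$, and the apex $v$ must be kept ''beyond $F$ and only $F$'' at every instant of the continuous deformation, which is a constraint coupling the two parts of the polytope. I expect to handle this by choosing, at each time, $v$ to be a canonical point — e.g. the reflection of the barycenter of $F$ through $F$ scaled by a fixed small amount, or the point completing a fixed affine-regular simplex on $F$ — and arguing by continuity and compactness of the $\bar R$-path that this canonical choice stays admissible, possibly after first shrinking the simplex cone (a ray movement of $v$) uniformly. Everything else is a routine translation between the primal stacking picture and the dual ray-movement picture, using projective uniqueness of the simplex and the hypothesis that $P$ is $\pr$-unique.
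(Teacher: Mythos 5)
Your overall strategy---use $\pr$-uniqueness of $P$ to handle the $P$-part and treat the stacked apex $v$ separately---is the same as the paper's, but the device you rely on to decouple $v$ from the rest of the polytope does not exist, and this is a genuine gap. In step (1) you invoke ``a projective transformation supported on the halfspace beyond $F$ that restricts to the identity on the hyperplane of $F$ and hence on $\bar{R}$.'' No nontrivial projective transformation is the identity on an open subset of $\R^d$ (it is determined by its values on $d+2$ points in general position), so nothing is ``supported on a halfspace''; and a projective transformation that fixes $\mathrm{aff}(F)$ pointwise is a homology or elation with that axis, which moves every point off $\mathrm{aff}(F)$---in particular it does \emph{not} fix $\bar{R}$, pointwise or setwise. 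Since this nonexistent map is what normalizes $v$ in your step (1) and performs the ``correction near $v$'' in step (3), the argument as written does not close.

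The repair stays inside the $\pr$-toolkit and is exactly the paper's trick: to move $v$ to a prescribed admissible position $u$ without disturbing $P$, first ray-scale $v$ to $\epsilon v$ so close to $F$ that the line through $u$ and $\epsilon v$ meets the relative interior of $F$, at $w$ say; then move the origin to $w$ (a projective transformation, undone at the end); then ray-scale $\epsilon v$ out to $u$ along the ray from the new origin. These three $\pr$-transformations genuinely leave the $P$-part alone, because a ray movement acts on a single vertex. With this substitution your step (1) and the apex corrections in step (3) go through. The coupling you flag in step (3)---a ray movement of a vertex of $F$ drags $\mathrm{aff}(F)$ and may invalidate $v$---is a real issue (the paper's own two-line proof, which fixes the $P$-part first and $v$ second, is silent about it), but your proposed fix of letting $v$ be a continuously varying canonical function of $F$ is not itself a projective or ray transformation. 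Instead, interleave the three-step repositioning above between consecutive moves of the $P$-part, each time placing $v$ in a position that stays admissible across the next move; since the admissible region is open, nonempty, and varies continuously along each move, compactness lets you subdivide the moves finitely so that such positions exist.
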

\begin{proof}
Do $\pr$-transformations so that the $P$ part has the correct shape, then get the new vertex $v$ to its desired position $u$ with transformations that do not effect the $P$ part: this can always be done with a sequence of $3$ $\pr$-transformations. For example, 
scale $v$ by $\epsilon$ so that $\epsilon v$ is close enough to $F$, namely so that the line through $u$ and $\epsilon v$ intersects the interior of $F$, say at $w$. Then move the origin to $w$, then scale $\epsilon v$ to $u$.
\end{proof}

We immediately conclude:

\begin{corollary}
Every polygon, and more generally every stacked polytope, is $\pr$-unique. In particular, in every dimension $d\geq 2$, there are infinitely many combinatorial types of $\pr$-unique polytopes.
\end{corollary}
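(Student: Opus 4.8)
The plan is to prove that every stacked $d$-polytope is $\pr$-unique by induction on the number of stacking steps, using the $d$-simplex as the base case and the Proposition just proved as the inductive step; the statement for polygons and the infinitude of combinatorial types will then follow immediately.

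First I would observe that the $d$-simplex $S$ is $\pr$-unique. Indeed, $S$ is projectively unique (as recalled in the introduction), so any two of its realizations with the origin in their interiors are related by a single projective transformation, and the origin-in-interior condition holds at both ends by hypothesis; hence no intermediate transformations, and in particular no excursion of the origin out of the interior, are required. For the inductive step, I would use that a stacked $d$-polytope is, combinatorially, either a $d$-simplex or of the form $P'\#_F S$, where $P'$ is a stacked $d$-polytope with one vertex fewer and $F$ is a facet of $P'$; since stacked polytopes are simplicial, $F$ is automatically a $(d-1)$-simplex. By the inductive hypothesis $P'$ is $\pr$-unique and $F$ is a simplex facet of $P'$, so the Proposition applies and $P'\#_F S$ is $\pr$-unique. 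This completes the induction and shows that every stacked polytope is $\pr$-unique.

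Polygons are then the case $d=2$: a $2$-dimensional stacked polytope is precisely an $n$-gon, obtained from a triangle by $n-3$ stackings along edges (each stacking being a connected sum with a triangle along a $1$-simplex), and conversely every $n$-gon arises this way, so every polygon is $\pr$-unique. For the final assertion, in dimension $2$ the $n$-gons for $n\ge 3$ are pairwise non-isomorphic and $\pr$-unique, and in dimension $d\ge 3$ the stacked $d$-polytopes with $n$ vertices for $n\ge d+1$ are pairwise non-isomorphic (they have distinct vertex counts) and $\pr$-unique; so each dimension $d\ge 2$ carries infinitely many combinatorial types of $\pr$-unique polytopes.

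I do not anticipate a real obstacle: the only points needing care are that stacking on a facet is literally a connected sum with a simplex along a simplex facet — so that the Proposition applies verbatim — and that the origin-in-interior condition is harmless, which is clear for the base case and holds along the Proposition's construction, where the origin is only ever moved to a point of the relative interior of the glued facet, which lies in the interior of the connected sum. The rest is routine bookkeeping in the induction.
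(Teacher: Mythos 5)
Your proposal is correct and is exactly the argument the paper intends: the paper gives no separate proof, stating the corollary as an immediate consequence of the preceding proposition, with the implicit induction (simplex base case, one application of the proposition per stacking step, polygons as the $d=2$ case) that you spell out. No gaps.
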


This is in contrast to projectively unique polytopes, which are only finitely many in dimension $2$ and $3$. (However, in sufficiently large fixed dimension $d$ there exist projectively unique $d$-polytopes with arbitrarily many vertices -- this was proved for $d\ge 69$ by Adiprasito and Ziegler~\cite{AdipZ15}, answering a question of Perles and Shephard~\cite{PerlS74}.)

We use the notation of~\cite{McMu76} about free joins and subdirect sums, and note that the following can be said:

\begin{proposition}
	The free join of two polytopes $P$ and $Q$ is weakly-$\pr$-unique if and only if both components are.
\end{proposition}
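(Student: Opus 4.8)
The plan is to recall from~\cite{McMu76} that the free join $P * Q$ of polytopes $P \subseteq \R^p$ and $Q \subseteq \R^q$ is realized by placing $P$ in a $p$-flat and $Q$ in a skew $q$-flat of $\R^{p+q+1}$ and taking the convex hull, so that $\vertices(P*Q) = \vertices(P) \sqcup \vertices(Q)$ and the face lattice is (essentially) the product of the proper-face posets. The key structural fact I would use is that the facets of $P*Q$ are exactly the joins $F * Q$ for $F$ a facet of $P$, together with $P * G$ for $G$ a facet of $Q$; dually, the vertex-rays of the polar $(P*Q)^{\triangle}$ split into two groups, one group indexed by facets of $P$ and the other by facets of $Q$, and these two groups span complementary subspaces meeting only in the join point. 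This is the geometric separation that makes the ray transformations on the two sides independent.

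For the ``if'' direction I would argue as follows. Suppose $P$ and $Q$ are each weakly-$\pr$-unique, i.e.\ any two realizations of $P$ (resp.\ $Q$) differ by a composition of projective transformations and ray transformations (the latter now permitted to send facet hyperplanes to infinity, as allowed in the excerpt's definition). Given two realizations $R_1, R_2$ of $P*Q$, first apply a single projective transformation to $R_2$ to bring the join-flat structure into a normalized position matching that of $R_1$ (this is possible because, projectively, the ``skew join'' configuration of two flats is rigid up to the choice of realizations within each flat — it is the content of the standard free-join normal form in~\cite{McMu76}). After this normalization the $P$-part and the $Q$-part of $R_1$ and of $R_2$ each live in a common flat; because ray transformations of $P*Q$ act on the two vertex-groups independently (the rays in one group are fixed pointwise by a ray transformation supported on the other group, since they span complementary subspaces through the join point), I can run the weakly-$\pr$-uniqueness sequence for $P$ on the $P$-vertices and, separately, the sequence for $Q$ on the $Q$-vertices, interleaving freely. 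Concatenating the projective normalization with these two sequences carries $R_1$ to $R_2$, so $P*Q$ is weakly-$\pr$-unique.

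For the ``only if'' direction I would show that if $P*Q$ is weakly-$\pr$-unique then so is each factor, by a retraction/restriction argument: any two realizations of $P$ can be completed (using a fixed realization of $Q$ in a skew flat) to two realizations of $P*Q$; applying the given sequence of projective and ray transformations for $P*Q$ and then reading off the induced transformations on the $P$-flat yields a sequence of projective and ray transformations on $P$. The one point needing care is that a ray transformation on $P*Q$ may move $Q$-vertices, hence the ambient configuration, so one must check the induced map on the $P$-flat is still of the allowed type — this follows because the $P$-vertex-rays through the join point are preserved setwise and the map restricted to the $P$-flat scales each such ray, which is exactly a (weak) ray transformation of $P$; projective pieces restrict to projective pieces by functoriality of projective maps on flats.

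The main obstacle I expect is the bookkeeping in the ``only if'' direction: making precise that restricting a ray transformation of $P*Q$ to the $P$-flat produces a genuine ray transformation of $P$ (and not something that degenerates $P$), and handling the edge case where the sequence for $P*Q$ temporarily pushes a facet of the $P$-part to infinity. Since we are in the \emph{weakly}-$\pr$-unique setting, such escapes to infinity are explicitly allowed, so I anticipate this is manageable, but it is where the argument must be written out with care rather than waved through.
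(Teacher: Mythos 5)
Your proposal is correct and takes essentially the same route as the paper, which disposes of this proposition with the single remark that it ``follows easily, as we may act on each component separately.'' Your elaboration of the two directions --- acting independently on the two skew factors for sufficiency, and restricting the transformation sequence to the $P$-face for necessity --- supplies the details the paper omits.
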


This follows easily, as we may act on each component separately.
The same is not true for $\pr$-uniqueness, and therefore $\pn$-uniqueness. A counterexample is the cone over the crosspolytope.
Indeed, it follows from the following observation, that is straightforward from the definitions:
\begin{lemma}
If $P$ is $\pr$-unique then every facet of $P$ is projectively unique.
\end{lemma}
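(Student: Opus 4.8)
The plan is to track how the facet $F$ transforms under \emph{each individual} transformation in the $\pr$-sequence: I will show that a single ray transformation or a single projective transformation of $P$ restricts to a \emph{projective} equivalence of the $(d-1)$-polytope $F$ with the corresponding facet of the new realization, then compose over the whole sequence, and finally note that every realization of the combinatorial type of $F$ occurs as a facet of some realization of $P$.

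First I would fix a realization of $P$ with $0\in P^{\circ}$ and a facet $F$ on the hyperplane $H_F=\set{x\in\R^d}{\langle a,x\rangle=1}$, where $\langle a,\cdot\rangle\le 1$ on $P$ with equality exactly on $F$ (possible since $0\notin H_F$); keeping $0$ in the interior at every stage, which is exactly what $\pr$-uniqueness (as opposed to weakly-$\pr$-uniqueness) guarantees, lets me do this uniformly along the sequence. A projective transformation $\varphi$ of $\R^d$ carrying $P$ to a realization $P'$ has its pole hyperplane disjoint from the bounded set $P$, so it restricts to a projective isomorphism of $F$ onto the corresponding facet $F'$ of $P'$ carrying $H_F$ to $H_{F'}$; hence $F$ and $F'$ are projectively equivalent. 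The one computation with content is the ray transformation case. Suppose $\psi^{\triangle}$ sends each vertex $v$ of $P$ to $\lambda_v v$ with $\lambda_v>0$, producing $P'$ whose corresponding facet $F'$ lies on $H_{F'}=\set{x\in\R^d}{\langle a',x\rangle=1}$. For a vertex $v$ of $F$ we have $\lambda_v v\in H_{F'}$, so $\lambda_v=1/\langle a',v\rangle$ and in particular $\langle a',v\rangle>0$. Consider the projective transformation of $\R^d$ given by $x\mapsto \dfrac{x}{\langle a'-a,x\rangle+1}$ (its defining matrix is evidently invertible). On $H_F$, where $\langle a,x\rangle=1$, this map equals $x\mapsto x/\langle a',x\rangle$, which sends $H_F$ into $H_{F'}$ and sends each vertex $v$ of $F$ to $\lambda_v v$; moreover on $F$ the denominator equals the affine function $\langle a',\cdot\rangle$, which is positive at every vertex and hence on all of $F$, so the pole misses $F$. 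Therefore this transformation restricts to a projective equivalence of $F$ with $F'$.

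Next I would concatenate. Given any two realizations of $P$ with corresponding facets $F$ and $\tilde F$, $\pr$-uniqueness supplies a chain of ray and projective transformations joining them, with all intermediate polytopes being realizations of $P$ having $0$ in their interior; applying the previous paragraph at each step and composing the resulting projective equivalences shows $F$ and $\tilde F$ are projectively equivalent. To upgrade this to projective uniqueness of $F$ as an abstract polytope, I would invoke that every realization $G$ of the combinatorial type of $F$ extends to a realization of $P$ having $G$ as the facet in question — a routine fact, and in the situation actually used in the paper ($P$ a pyramid, e.g.\ the cone over a crosspolytope) it is just coning $G$ from a point off its affine hull. Then the two previous paragraphs, applied to $P$ and to that extension, give $G$ projectively equivalent to $F$; as $G$ was arbitrary, $F$ is projectively unique.

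The only real obstacle is the ray-transformation step: one must produce the explicit projective transformation of the ambient space whose restriction to $F$ reproduces the radial rescaling, and the two places needing care are matching the facet hyperplanes $H_F$ and $H_{F'}$ and the sign check $\langle a',v\rangle>0$ that keeps the pole away from $F$. Everything else — the projective-transformation case, the composition, and the extension of a facet realization — is formal or standard.
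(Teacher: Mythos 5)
Your core computation is right, and it supplies exactly what the paper omits (the lemma is asserted as ``straightforward from the definitions'' with no proof given): a single admissible projective transformation of $\R^d$ restricts to a projective equivalence between corresponding facets, and a single ray transformation does too, via your explicit map $x\mapsto x/(\langle a'-a,x\rangle+1)$, whose restriction to $H_F$ sends each vertex $v$ of $F$ to $\lambda_v v$ and whose pole misses $F$ because $\langle a',\cdot\rangle>0$ on $\conv(\vertices(F))$. Composing along a $\pr$-chain, what you have actually proved is: for a $\pr$-unique $P$, all realizations of the combinatorial type of $F$ \emph{that occur as the corresponding facet of some realization of $P$} form a single projective equivalence class.

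The gap is the last step, where you call it ``a routine fact'' that every realization $G$ of the combinatorial type of $F$ extends to a realization of $P$ having $G$ as the designated facet. This is not routine and is false in general: for $d\ge 4$ the shape of a facet of a $d$-polytope cannot always be prescribed (Barnette's examples, and far more drastically the consequences of Richter-Gebert's universality theorem for $4$-polytopes), so the single projective class you control need not exhaust the realizations of $F$, and projective uniqueness of $F$ as an abstract combinatorial type does not follow. The extension \emph{is} trivial when $P$ is a pyramid over $F$ (cone $G$ from a point off its affine hull), which covers the only application the paper makes of the lemma (the cone over the crosspolytope); but as a proof of the lemma as stated, for an arbitrary $\pr$-unique $P$ and arbitrary facet, the step is missing, and I do not see how to supply it in general --- indeed the results of Adiprasito and Ziegler~\cite{AdipZ15}, producing projectively unique (hence $\pr$-unique) polytopes with essentially arbitrary prescribed faces, suggest that some restriction of this kind is unavoidable. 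You should either prove the extension property for the class of $P$ you care about, or weaken the conclusion to ``all corresponding facets of realizations of $P$ are projectively equivalent,'' which is all the paper's application needs.
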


The next result holds for $\pn$-uniqueness, by following the proof of Proposition~\ref{prop:main} for the cube case.

\begin{theorem}
	The subdirect sum of a $\pr$-unique polytope with a simplex is $\pr$-unique, and vice versa. Dually, the subdirect product with a simplex is $\pn$-unique if and only if the original polytope is.
\end{theorem}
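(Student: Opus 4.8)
The plan is to imitate the proof of Proposition~\ref{prop:main}, replacing "crosspolytope" by "the polar of a subdirect product with a simplex" and carrying out the iterations only along the coordinate directions that come from the simplex factor. Recall that the subdirect product of a polytope $P$ with an $e$-simplex is, on the polar side, the free join $P^{\triangle} * \Delta_e^{\triangle}$ (a subdirect sum), and $\Delta_e^{\triangle}$ is again a simplex. So it suffices, dually, to show: if $P$ is $\pr$-unique, then the subdirect sum (free join) $P * S$ with a simplex $S$ is $\pr$-unique, where throughout we keep the origin in the relative interior. First I would set up coordinates so that $P$ lies in an affine subspace $\R^k\times\{0\}$ and $S$ lies in $\{0\}\times\R^{e}$ (shifted to be affinely independent from the $P$-subspace), with the origin in the interior of the join. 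The vertices of $P*S$ are the vertices of $P$ (embedded with last $e$ coordinates zero) together with the $e+1$ vertices of $S$ (embedded with first $k$ coordinates zero).

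The key steps, in order. First, apply the $\pr$-uniqueness of $P$ to bring the $P$-part into a fixed standard position; since the transformations realizing $\pr$-uniqueness of $P$ act on $\R^k$, I would lift them to $\R^{k+e}$ by acting trivially on the last $e$ coordinates — a projective transformation of the ambient space that fixes the simplex directions and keeps the origin interior — so the $P$-part is now standard and untouched from here on. Second, I would run, for each vertex/antipodal-segment of the simplex factor (the simplex $S$ has $e+1$ vertices, so think of the $e$ "directions" it spans together with the structure of the join), the four-step iteration of Proposition~\ref{prop:main}: translate the origin onto the relevant segment via Lemma~\ref{lem:new0}, push the chosen vertex out along its ray (a ray transformation) until a separating hyperplane $H$ orthogonal to that segment exists, move the origin close to that vertex (projective), and then project the remaining vertices onto $H$ (ray transformation). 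The crucial point, exactly as in Proposition~\ref{prop:main}, is that all these moves either translate previously-normalized segments or move vertices along their own rays, so they do not disturb (i) the already-standardized $P$-part — because the $P$-vertices lie in the hyperplane $H_j$'s that are spanned by rays and all our new points stay on rays — and (ii) the orthogonality/common-intersection structure built up in earlier iterations. After processing all simplex directions, the simplex factor is in standard position and the two factors are orthogonal and share the origin; a final normal (ray) transformation and an isometry finish the normalization. Since the target standard position is fixed, any two realizations of $P*S$ are related by composing one such sequence with the inverse of another, giving $\pr$-uniqueness; dualizing gives $\pn$-uniqueness of the subdirect product with a simplex. The converse ("vice versa") is the easy direction: a face of a $\pr$-unique polytope is projectively unique (the Lemma just above), and $P$ sits inside $P*S$ as a factor whose realization is determined, so if $P*S$ is $\pr$-unique then so is $P$ (one recovers $P$ by intersecting with the $\R^k$-subspace and using that the join structure forces the $P$-part to be rigid up to $\pr$-moves).

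The main obstacle I expect is the bookkeeping that guarantees step (ii) — namely, checking carefully that the four-step iteration from Proposition~\ref{prop:main}, when applied in the free-join setting rather than for a single crosspolytope, genuinely leaves the $P^{\triangle}$ vertices fixed (or only harmlessly translated) while the separating hyperplanes $H$ can still be chosen orthogonal to the simplex segments. In the pure cube case every segment was symmetric ("antipodal"), whereas here the simplex vertices are not symmetric about the origin, so I would need to verify that "move the origin onto the segment" and "choose $c$ large enough that an orthogonal separating hyperplane exists" still go through when the segment is one of the join-segments of a simplex — this is a routine but slightly delicate genericity argument. A secondary point to get right is that the lift of $P$'s $\pr$-transformations to the ambient space does not push any facet hyperplane to infinity on the join, i.e. that we stay inside the world of polytopes with the origin interior; this follows because the lifted map is block-triangular and we may rescale the simplex factor first so that everything stays bounded.
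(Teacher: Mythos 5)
Your forward direction is essentially the paper's own argument: the paper proves this theorem only by the one-line remark that it follows ``by following the proof of Proposition~\ref{prop:main} for the cube case,'' and your plan --- first normalize the $P^{\triangle}$-summand using $\pr$-uniqueness of $P$ (lifted to act trivially on the simplex directions), then run the four-step iteration of Proposition~\ref{prop:main} on the simplex summand, observing that translations via Lemma~\ref{lem:new0} and ray scalings leave the already-normalized part undisturbed --- is a faithful elaboration of exactly that plan. The delicate points you flag (the simplex vertices are not antipodally symmetric, and one must keep the origin interior throughout) are precisely the ones that need checking, and they do go through: the role of the segment $l_i$ is played by the segment from a simplex vertex to a point of the opposite facet of the simplex summand, and the separating-hyperplane argument only used that the chosen vertex can be pushed arbitrarily far out along its ray.

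The one genuine weak spot is your ``vice versa'' direction. You justify it by citing the lemma that every facet of a $\pr$-unique polytope is projectively unique, but $P$ is not a facet of the subdirect sum with $S$ --- in the direct-sum case it is not even a face (the proper faces of a free sum are joins of \emph{proper} faces of the summands; e.g.\ the diagonal of the octahedron is not an edge). So that lemma does not apply as stated, and in any case projective uniqueness of $P$ is not what is claimed; $\pr$-uniqueness of $P$ is. The correct route for this direction is to note that any two realizations of $P$ with the origin interior extend to realizations of the subdirect sum (by attaching a fixed copy of $S$ in a complementary subspace), apply the hypothesis to connect these by a $\pr$-sequence, and then argue that this sequence can be arranged to restrict to a $\pr$-sequence on the affine hull of the $P$-summand --- which requires an argument, since a projective transformation of the ambient space need not preserve that subspace. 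The paper offers no proof of this direction either, so this is a gap relative to the statement rather than a divergence from the paper, but as written your justification would not compile into a proof.
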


This is especially interesting if one considers only those polytopes that are obtained as products of simplices. These are $\pn$-unique by the above theorem (and include the cube). Moreover, the algorithm described in Proposition~\ref{prop:main} goes through verbatim, and is continuously dependent on the starting geometry. Hence, we once again obtain that the realization space of such polytopes is contractible (a fact not known for general $\pn$-unique or $\pr$-unique polytopes). We end with a question:

\begin{problem}
	Is the dodecahedron $\pn$-unique?
\end{problem}

\subsection*{Acknowledgements}
We thank Ron Adin for helpful discussions.
The first author acknowledges support by ERC StG 716424 - CASe and ISF Grant 1050/16.
The second and third authors acknowledge support by ISF grant 1695/15 and by ISF-BSF joint grant 2016288.


\bibliographystyle{myamsalpha}
\bibliography{cubical}

\end{document}